\newlength{\dhatheight}
\theoremstyle{definition}
\newtheorem{theorem}{Theorem}[section]
\newtheorem{proposition}[theorem]{Proposition}
\newtheorem{lemma}[theorem]{Lemma}
\newtheorem{remark}[theorem]{Remark}
\newtheorem{corollary}[theorem]{Corollary}
\newtheorem{question}[theorem]{Question}
\newtheorem{claim}[theorem]{Claim}
\newcommand{\ch}[3][2]{ {{#2} \choose{#3}} }
\newlist{pcases}{enumerate}{1}
\setlist[pcases]{
  label={\em{Case~\arabic*:}}\protect\thiscase.~,
  ref=\arabic*,
  align=left,
  labelsep=0pt,
  leftmargin=0pt,
  labelwidth=0pt,
  parsep=0pt
}
\newcommand{\case}[1][]{%
  \if\relax\detokenize{#1}\relax
    \def\thiscase{}%
  \else
    \def\thiscase{~#1}%
  \fi
  \item
}
\title{Random meander model for links}
\author{Nicholas Owad}
\author{Anastasiia Tsvietkova}
\begin{document}

\begin{abstract}

We suggest a new random model for links based on meander diagrams and graphs. We then prove that trivial links appear with vanishing probability in this model, no link $L$ is obtained with probability 1, and there is a lower bound for the number of non-isotopic knots obtained for a fixed number of crossings. A random meander diagram is obtained through matching pairs of parentheses, a well-studied problem in combinatorics. Hence tools from combinatorics can be used to investigate properties of random links in this model, and, moreover, of the respective 3-manifolds that are link complements in 3-sphere. We use this for exploring geometric properties  of a link complement. Specifically, we give expected twist number of a link diagram and use it to bound expected hyperbolic and simplicial volume of random links. The tools from combinatorics that we use include Catalan and Narayana numbers, and Zeilberger’s algorithm.

\

\textbf{Keywords:}  Random links, knots, meanders, link complement, hyperbolic volume

\

\textbf{MSC 2020:}  57K10, 57K32, 05C80
\end{abstract}

\maketitle

\section{Introduction}

In the recent years, there has been an increased interest in using probabilistic methods in low-dimensional geometry and topology. A number of models for 3-manifolds and links appeared, and were used to study their topological properties and various invariants. For example, Dunfield and W. Thurston \cite{DunfieldThurston} studied finite covers of random 3-manifolds, and Even-Zohar, Hass, Linial and Nowik \cite{EZHLN4} studied linking number and Casson invariants in random petaluma model for knots. One of the benefits of using random models is that they often allow one to check the typical behavior of an object (e.g. of a link) beyond well-studied families. In this paper, we introduce a new model for links called the {\em random meander link} model. For an overview of previously existing models for random knots and links, we direct the reader to the exposition by Even-Zohar \cite{EvenZohar}.

For our model, we use meander diagrams. Informally, a meander is a pair of curves in the plane, where one curve is assumed to be a straight line and the other one ``meanders'' back and forth over it. In a meander diagram, we assume that the ends of these two curves are connected.  The study of meanders dates back at least to Poincar\'e's work on dynamical systems on surfaces. Meanders naturally appear in various areas of mathematics (see, for example, combinatorial study by Franz and Earnshaw \cite{FE}), as well as in natural sciences. Every knot is known to have a meander diagram \cite[Theorem 1.2]{AST}, and we generalize these diagrams so that every link has one too. This is described  in Section \ref{sec:model}. It does not follow however that a random model that produces various meander diagrams will produce all knots or links: indeed, many distinct meander diagrams may represent isotopic (i.e. equivalent) links. We address this separately, as explained below.

The first important question about any random link model is whether it produces non-trivial links with high probability. The proof of this is often far from trivial. For example, for a \textit{grid walk model}, it was conjectured by Delbruck in 1961 that a random knot $K_n$ is knotted with high probability \cite{Delbruck}. And only in 1988 and 1989, two proofs of this appeared, one by Sumners and Whittington \cite{SW}, and another by Pippenger \cite{Pippenger}. It was also conjectured that a different model, called a \textit{polygonal walk model}, produces unknots with vanishing probability by Frischand and Wasserman in 1961 \cite{FW}. The first proof of this appeared in 1994, for Gaussian-steps polygons, due to Diao, Pippenger, and Sumners \cite{DPS}. For another, more recent random knot model, called \textit{petaluma model} for knots, the paper from 2018 by Even-Zohar, Hass, Linial and Nowik  \cite{EZHLN1} is mostly devoted to proving that the probability of obtaining every specific knot type decays to zero as the number of petals grows. Yet another recent work by Chapman from 2017 \cite{Chapman} studies \textit{random planar diagram model} and shows that such diagrams are non-trivial with high probability. This list of such results is not exhaustive. 

In Sections \ref{SecCircles} and \ref{Unlinks}, we prove that as the number of crossings in a link diagram grows, we obtain a non-trivial knot or link with probability 1 in the \textit{random meander model}. This is the first main result of this paper, given in Theorem \ref{thm:Rare}. While we use a programmed worksheet for Zielberger's algorithm as a shortcut (which is mathematically rigorous), the main part of the proof is theoretical rather than computer-assisted, and relies on observations concerning topology of knots, graphs and combinatorics. In particular, we show that a certain fragment ("a pierced circle") appears in our random meander link diagrams with high probability. This can be compared with Chapman's approach \cite{Chapman}: he shows that another fragment, similar to a trefoil, appears in random planar diagrams with high probability as well. As a corollary, we also show that as the number of crossings and components in a random diagram grows, no link is obtained with probability 1 in our random model, and therefore the model yields infinitely many non-isotopic links (Proposition \ref{nonisotopic}). We proceed to give a lower bound on the number of distinct (up to isotopy) knots produced by our model for a fixed number of crossings in Proposition \ref{prime}.

In Section \ref{VolumeEtc}, we give an application of this random model to low-dimensional topology by finding the expected number of twists in a random link diagram. This is Theorem \ref{twists}. This number is related to geometric properties of the respective link complement in 3-sphere, as is shown, for example, in the work on hyperbolic volume by Lackenby \cite{Lackenby} and Purcell \cite{Purcell}, or in the work on cusp volume by Lackenby and Purcell \cite{LackenbyPurcell2}. Applying the former paper by Lackenby and its extension concerning simplicial volume by Dasbach and Tsvietkova \cite{DT}, we give bounds for the expected hyperbolic or simplicial volume of links in Corollary \ref{volume}, in the spirit of Obeidin's observations \cite{Obeidin}. Together, Theorem \ref{twists} and Corollary \ref{volume} can be seen as the second main result of the paper. We conclude with some open questions concerning hyperbolicity and volume of random meander links.

One of the advantages of random meander link model is that constructing meander diagrams corresponds in a certain way to a well-known combinatorial problem about pairs of parentheses which are correctly matched. Thus the number of random meander diagrams has a simple formula in terms of Catalan numbers. Moreover, topological and diagrammatic properties of random meander links translate into combinatorial identities, and can be investigated using tools from combinatorics. For example, to prove that unlinks are rare, we use Poincar\'e's theorem about recurrence relations, and Zeilberg's algorithm that finds a polynomial recurrence for hypergeometric identities. Finding mathematical expectation for volume of a link complement involves summing Narayana numbers, another well-known series. This is perhaps an unexpected way to look at knot theory problems.

\section{Random meander link model}\label{sec:model}

We begin by providing the background required to define our random model.  The following result is attributed to Gauss.

\begin{theorem}\label{thm:AST}{\cite[Theorem 1.2]{AST}}
Every knot has a projection that can be decomposed into two sub-arcs such that each sub-arc never crosses itself.
\end{theorem}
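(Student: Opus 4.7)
The plan is to reduce the statement to a combinatorial condition on the Gauss code of $K$. Parametrize $K$ by $S^1$ and encode a projection by its chord diagram, in which each crossing contributes a chord joining its two pre-image parameter values on $S^1$. Decomposing $K$ into two sub-arcs, neither of which crosses itself, is then equivalent to choosing two cut points $a,b \in S^1$ such that every chord has exactly one endpoint in each arc of $S^1 \setminus \{a,b\}$. Thus the statement becomes: every knot admits a projection whose chord diagram has a pair of such separating points.

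To produce such a projection, I would pass to a bridge presentation of $K$, which exists for every knot by a classical result. Place $K$ in $\R^3$ with $m$ local maxima above, and $m$ local minima below, an equatorial plane $P$, and let the $2m$ intersections of $K$ with $P$ be the endpoints of $m$ upper arcs and $m$ lower arcs. Both the upper and the lower tangles are trivial in their respective half-spaces, so each may be isotoped into a layered normal form in which the arcs occupy consecutive vertical levels and project to pairwise disjoint simple arches on $P$.

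After choosing the projection direction to be vertical, all upper arcs project to disjoint simple arches, all lower arcs project to disjoint simple arches, and every crossing of the resulting diagram occurs between an upper arc and a lower arc. The union of the upper arcs, traversed in the order they appear along $K$, forms one self-avoiding sub-arc $\alpha$; the union of the lower arcs likewise forms a second self-avoiding sub-arc $\beta$; and $K = \alpha \cup \beta$ is the desired decomposition.

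The main obstacle is to linearly order the $2m$ endpoints on $P$ in a way compatible with \emph{both} matchings simultaneously, so that the layered arches in the two half-spaces do not interfere. I expect to overcome this by allowing stabilizations of the bridge presentation (increasing $m$), which introduces additional endpoint pairs and provides enough freedom to arrange a common compatible cyclic order. Once such an order is fixed, the layered isotopies in the two half-spaces produce the meander projection, and the two corresponding cut points on $S^1$ certify the decomposition required by the theorem.
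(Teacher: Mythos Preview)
The paper does not prove this theorem; it is quoted from \cite{AST} and attributed to Gauss, with no argument given. So there is no proof here to compare against, and your proposal has to stand on its own.

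There is a genuine gap. In a bridge presentation with $m$ bridges, the $m$ upper arcs and the $m$ lower arcs \emph{alternate} along $K$: each time $K$ crosses the equatorial plane $P$ it passes from an upper arc to a lower arc or conversely. Consequently the union of the upper arcs is never a single connected sub-arc of $K$ unless $m=1$, i.e.\ unless $K$ is the unknot. Your sentence ``the union of the upper arcs, traversed in the order they appear along $K$, forms one self-avoiding sub-arc $\alpha$'' is therefore false for every nontrivial knot; the decomposition you produce is into $2m$ arcs, not two. Equivalently, in your chord-diagram reformulation you need two cut points $a,b$ with every chord separated by $\{a,b\}$, but you never exhibit such points --- taking $\alpha$ and $\beta$ to be the upper and lower halves does not correspond to any choice of $a,b$ at all.

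The remedy you sketch in the last paragraph --- reordering the $2m$ endpoints on $P$ and stabilizing --- does not address this. Stabilization adds more bridges, but upper and lower arcs still alternate along $K$ by the very definition of bridge position, so the obstruction persists verbatim. The arguments that actually establish the theorem (Gauss's original, or the one in \cite{AST}) do not pass through bridge position: they work directly with the Gauss code, using local diagram moves to push every ``second visit'' to a crossing past a chosen cut point, at the cost of introducing new crossings. If you want to salvage the bridge-presentation approach you would need a genuinely new idea explaining how to locate the two cut points $a,b$ on $K$ and why every crossing ends up separated; nothing in your outline supplies this.
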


By planar isotopy, one of these sub-arcs can be taken to be a subset of the $x$-axis, which we will call the {\em axis}. The resulting diagram is called {\em straight}.  The other sub-arc we call the {\em meander curve.}  We will call a segment of the meander curve  between two consecutive crossings an {\em arc}.  The {\em {complementary} axis} is the $x$-axis minus the axis.  Then every arc that makes up the meander curve either crosses the complementary axis or not.  If an arc does not cross the complementary axis, we call it a {\em contained arc} and if it does cross the complementary axis, it is an {\em uncontained arc}. A straight diagram is said to be a {\em meander diagram} if there are no uncontained arcs.  See Figure \ref{fig:AST}.

\begin{figure}[h]
\begin{center}
\begin{tikzpicture}[scale=.4]

\begin{scope}[xshift = -8cm, scale = .8]

\draw [ultra thick] (-.2,.-.2) to [out=40, in=-90] (1.5,2.4);

\draw [ultra thick] (-.2,-.2) to [out=180+40, in=90] (-1.2,-1.6) to [out=-90, in=180-30] (0,-3.2);

\draw [line width=0.13cm, white] (0,-3.2) to [out=-30, in=-90] (3.5,-.6) to [out=90, in=-20] (1.6 ,2) to [out=160, in=20] (-1.3 ,2.1);
\draw [ultra thick] (0,-3.2) to [out=-30, in=-90] (3.5,-.6) to [out=90, in=-20] (1.6 ,2) to [out=160, in=20] (-1.3 ,2.1);

\draw [ultra thick] (.2,-.2) to [out=-40, in=120] (1.05,-1);
\draw [ultra thick] (1.05,-1) to [out=300, in=70] (1.1,-2.3);

\draw [line width=0.13cm, white] (1.1,-2.3) to [out=-110, in=27] (0,-3.25) to [out=207, in=-90] (-3.5,-.6) to [out=90, in=200] (-1.6 ,2);
\draw [ultra thick] (1.1,-2.3) to [out=-110, in=27] (0,-3.25) to [out=207, in=-90] (-3.5,-.6) to [out=90, in=200] (-1.6 ,2);

\draw [line width=0.13cm, white] (-.2,.2) to [out=140, in=-90] (-1.5,2.4);
\draw [ultra thick] (-.2,.2) to [out=140, in=-90] (-1.5,2.4);

\draw [ultra thick] (-1.5,2.4) arc [radius=1.5, start angle=180, end angle= 0];
\end{scope}

\begin{scope}[xshift = -1cm, yshift=0cm, scale=1.1]

\draw [ultra thick] (3,0) arc [radius=1/2, start angle=180, end angle= 0];

\draw [ultra thick] (2,0) arc [radius=3/2, start angle=180, end angle= 0];

\draw [ultra thick] (-1,0) arc [radius=1, start angle=180, end angle= 0];
\draw [ultra thick] (-1,0) arc [radius=5/2, start angle=180, end angle= 360];

\draw [ultra thick] (0,0) arc [radius=3/2, start angle=180, end angle= 360];

\draw [ultra thick] (1,0) arc [radius=1/2, start angle=180, end angle= 360];

\draw [line width=0.13cm, white] (0.5,0) to (1.85,0);
\draw [line width=0.13cm, white] (2.15,0) to (3.5,0);

\draw [ultra thick] (0,0) to (1.8,0);
\draw [ultra thick] (2.2,0) to (3.8,0);
\draw [ultra thick] (4.2,0) to (5,0);

\draw [ultra thick, line cap = round] (5,0) to (5,0);
\draw [ultra thick, line cap = round] (0,0) to (0,0);

\end{scope}

\begin{scope}[ xshift=8cm, scale=1.2]

\draw [ultra thick] (0,0) arc [radius=5/2, start angle=180, end angle= 360];

\draw [ultra thick] (1,0) arc [radius=1/2, start angle=180, end angle= 0];
\draw [ultra thick] (1,0) arc [radius=3/2, start angle=180, end angle= 360];

\draw [ultra thick] (2,0) arc [radius=1/2, start angle=180, end angle= 360];

\draw [ultra thick] (3,0) arc [radius=3/2, start angle=180, end angle= 0];

\draw [ultra thick] (4,0) arc [radius=1/2, start angle=180, end angle= 0];

\draw [line width=0.13cm, white] (1.5,0) to (2.85,0);
\draw [line width=0.13cm, white] (4.15,0) to (5.5,0);

\draw [ultra thick] (0,0) to (.8,0);
\draw [ultra thick] (1.2,0) to (2.8,0);
\draw [ultra thick] (3.2,0) to (3.8,0);
\draw [ultra thick] (4.2,0) to (6,0);

\draw [ultra thick, line cap = round] (6,0) to (6,0);
\draw [ultra thick, line cap = round] (0,0) to (0,0);

\end{scope}

\end{tikzpicture}
\end{center}
\caption{Three diagrams of the figure-eight knot.  On the left, the standard diagram; in the center, a straight diagram; on the right, a meander diagram.  }\label{fig:AST}
\end{figure}
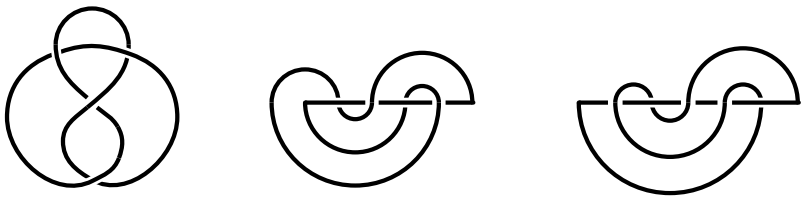

For study of meander knots and links from the point of view of knot theory and low-dimensional topology, see for example \cite{JR, Owad1, Owad2}. As we will see, variations of the constructions that we describe below (meander knot, meander link, meander link with multiple parallel strands) appeared in literature before, at times in a somewhat different form, but this is the first time they are used for a random model. The following fact was proved by Owad in \cite{Owad2}, Theorem 2.8, and was known to Adams, Shinjo, and Tanaka (see Fig. 2 in \cite{AST}). 

\begin{theorem}\label{thm:OwadMeander}
Every knot has a meander diagram.
\end{theorem}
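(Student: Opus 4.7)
The plan is to build on Theorem \ref{thm:AST} and modify an arbitrary straight diagram into one with no uncontained arcs, by eliminating uncontained arcs one at a time via a local move that preserves both the knot type and the simplicity of the meander curve.

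First I would apply Theorem \ref{thm:AST} to obtain a straight diagram $D$ of the knot $K$, consisting of an axis $\alpha = [a,b] \times \{0\}$ and a meander curve $\gamma$ (a simple arc) joining $(a,0)$ to $(b,0)$ whose crossings with $\alpha$ cut it into arcs. If $D$ is already a meander diagram we are done, so I would assume it has at least one uncontained arc. I would then proceed by induction on the number of uncontained arcs, selecting at each stage an \emph{innermost} such arc $\sigma$ around some endpoint of $\alpha$, say $(b,0)$, so that $\sigma$ together with a portion of the complementary axis bounds a disk $\Delta$ whose interior is disjoint from $\gamma \setminus \sigma$. The existence of an innermost uncontained arc follows from the fact that $\gamma$ has no self-crossings, which forces the uncontained arcs hanging off each endpoint of $\alpha$ to be nested.

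The key step is a local move that replaces $\sigma$ by a contained arc. The idea is to ``pull'' $\sigma$ through the endpoint $(b,0)$ into the strip $\{a \le x \le b\}$, performing the slide inside a neighborhood of $\Delta$. Because $\Delta$ avoids the rest of $\gamma$, the move does not create self-intersections of the meander curve, so the output is still a straight diagram. The slide introduces two new transverse intersections of the pulled arc with $\alpha$, which become new crossings of the planar diagram; by assigning them a cancelling over/under pattern, i.e., a Reidemeister II pair, the knot type is preserved. The new arc is now contained, and the total number of uncontained arcs has strictly decreased.

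The main obstacle is making the ``pull through the endpoint'' step rigorous, since $(b,0)$ is precisely where $\gamma$ meets $\alpha$, and a naive push of $\sigma$ across this point forces $\sigma$ to cross the initial segment of $\gamma$ emanating from $(b,0)$, producing an unwanted self-intersection. I would circumvent this by combining the push with a small local rotation at $(b,0)$ that effectively lengthens $\alpha$ by absorbing a tiny initial portion of $\gamma$ into the axis (analogous to a Reidemeister I-like simplification near the endpoint), creating the room needed for $\sigma$ to slide inside without meeting any other strand. Iterating this procedure finitely many times --- once per uncontained arc --- yields a diagram where every arc of $\gamma$ lies in the strip, i.e., a meander diagram of $K$.
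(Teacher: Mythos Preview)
The paper does not give its own proof of this statement; it is stated with a citation to Theorem~2.8 of \cite{Owad2} and a remark that the fact was already known to Adams--Shinjo--Tanaka (Fig.~2 in \cite{AST}). So there is no in-paper argument to compare against.

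Your sketch follows what is essentially the standard strategy (and the one in \cite{Owad2}): start from a straight diagram supplied by Theorem~\ref{thm:AST} and remove uncontained arcs one at a time by dragging them across an endpoint of the axis via a Reidemeister~II move. One point deserves tightening: an uncontained arc $\sigma$ has both endpoints on the axis $[a,b]$, not on the complementary axis, and it may meet the complementary axis several times, so ``$\sigma$ together with a portion of the complementary axis'' does not literally bound your disk $\Delta$. The clean formulation is to induct on the number of points in $\gamma\cap(\text{complementary axis})$ rather than on the number of uncontained arcs, take the intersection point closest to $(b,0)$, and use the short sub-arc of $\gamma$ through that point; this sub-arc together with the segment of the complementary axis from $(b,0)$ out to it bounds a genuine innermost half-disk whose interior meets $\gamma$ only in the terminal germ at $(b,0)$. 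With that adjustment your endpoint workaround (absorbing a small initial piece of $\gamma$ into the axis before performing the slide) is exactly what is needed, and the induction terminates.
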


A {\em meander graph} is the 4-valent planar graph obtained by replacing each crossing of a meander diagram with a vertex. The graph's edges are the upper semi-circles above the axis,
lower semi-circles below the axis, and every segment from a vertex to a vertex along the axis. In addition, there can be an edge of the graph made up of either the leftmost or the righmost segment of the axis and one semicircle, adjacent to that segment. Such an edge may be a loop, i.e. its two vertex endpoints may coincide. By a pair of parentheses we mean two parentheses, left and right: \texttt{()}. Further, we will refer to any string of $s$ pairs of parentheses as a \textit{p-string} of length $s$. Put a collection of upper semi-circles in correspondence with a  $p$-string, as in Figure  \ref{fig:parenLink}, where a pair $a$ is inside the pair $b$ if and only if the respective semicircle for $a$ is inside the respective semicircle for $b$. Do the same for lower semi-circles. To generate a random meander graph, we will use two $p$-strings, each of length $s$.

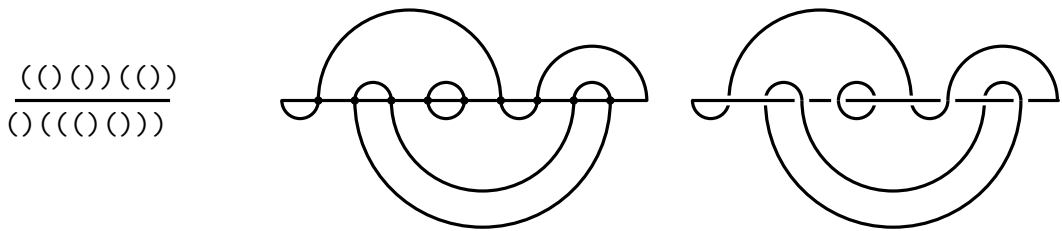
\begin{figure}[h]
\begin{center}
\begin{tikzpicture}[scale = .6]

\draw [very thick] (-5.85,0) to (-2.45,0);
\node [above] at (-4,0) {\texttt{(()())(())}};
\node [below] at (-4.29,0) { \texttt{()((()()))}};

\begin{scope}[scale = .8]
\draw [very thick, line cap=round] (0,0) to (10,0);
\draw [very thick] (10,0) arc [radius=1.5, start angle=0, end angle= 180];
\draw [very thick] (7,0) arc [radius=.5, start angle=0, end angle= -180];
\draw [very thick] (6,0) arc [radius=2.5, start angle=0, end angle= 180];
\draw [very thick] (1,0) arc [radius=.5, start angle=0, end angle= -180];

\draw [very thick] (9,0) arc [radius=.5, start angle=0, end angle= 180];
\draw [very thick] (8,0) arc [radius=2.5, start angle=0, end angle= -180];
\draw [very thick] (4,0) arc [radius=.5, start angle=180, end angle= 0];
\draw [very thick] (4,0) arc [radius=.5, start angle=180, end angle= 360];
\draw [very thick] (3,0) arc [radius=.5, start angle=0, end angle= 180];
\draw [very thick] (2,0) arc [radius=3.5, start angle=180, end angle= 360];

\foreach \a in {1,...,9}{
\draw [fill] (\a,0) circle [radius=0.1];
}
\end{scope}

\begin{scope}[xshift = 9cm, scale = .8]

\draw [very thick] (10,0) arc [radius=1.5, start angle=0, end angle= 180];
\draw [very thick] (7,0) arc [radius=.5, start angle=0, end angle= -180];
\draw [very thick] (6,0) arc [radius=2.5, start angle=0, end angle= 180];
\draw [very thick] (1,0) arc [radius=.5, start angle=0, end angle= -180];

\draw [very thick] (9,0) arc [radius=.5, start angle=0, end angle= 180];
\draw [very thick] (8,0) arc [radius=2.5, start angle=0, end angle= -180];
\draw [very thick] (4,0) arc [radius=.5, start angle=180, end angle= 0];
\draw [very thick] (4,0) arc [radius=.5, start angle=180, end angle= 360];
\draw [very thick] (3,0) arc [radius=.5, start angle=0, end angle= 180];
\draw [very thick] (2,0) arc [radius=3.5, start angle=180, end angle= 360];

\foreach \a in {.2}{
\draw [line width = .13cm, white] (\a,0) to (3-\a,0);
\draw [line width = .13cm, white] (3+\a,0) to (4-\a,0);
\draw [line width = .13cm, white] (4+\a,0) to (7-\a,0);
\draw [line width = .13cm, white] (7+\a,0) to (9-\a,0);
\draw [line width = .13cm, white] (9+\a,0) to (10-\a,0);

\draw [very thick] (0,0) to (3-\a,0);
\draw [very thick] (3+\a,0) to (4-\a,0);
\draw [very thick] (4+\a,0) to (7-\a,0);
\draw [very thick] (7+\a,0) to (9-\a,0);
\draw [very thick] (9+\a,0) to (10,0);

}
\draw [very thick, line cap=round] (0,0) to (0,0);
\draw [very thick, line cap=round] (10,0) to (10,0);

\end{scope}

\end{tikzpicture}
\caption{Left: two $p$-strings of length 5. Middle: the corresponding meander graph $\Gamma_9$.  Right: the random meander link $L_9$ obtained by adding crossing information string $V_9 = OOUUOOUOU$} to $\Gamma_9$.
\label{fig:parenLink}
\end{center}
\end{figure}

Let $\Gamma_{2s-1}$ be a meander graph generated with two $p$-strings of lengths $s$. The \textit{crossing information string} at each vertex is a word $V_{2s-1}$ consisting of letters $U$ and $O$ of length ${2s-1}$. The pair ($\Gamma_{2s-1}, V_{2s-1}$) defines a meander link $L_{2s-1}$ as follows: every letter in $V_{2s-1}$ from left to right corresponds to either overpass ($O$) or underpass ($U$) of the axis of the diagram drawn instead of the vertices of $\Gamma_{2s-1}$ from left to right.

Call the link component that contains the axis the {\em axis component}. All link crossings are a part of  this component. As a result, the other link components are unknots and not linked with each other.

Now we generalize this construction. For every link component of $L_{2s-1}$, take $r-1$ additional strands parallel to it. Instead of each crossing of $L_{2s-1}$ we have $r^2$ crossings in the new link. Substitute each of them by a vertex again. Denote the resulting graph by $\Gamma^r_{2s-1}$ and call it $(r,{2s-1})$-meander graph. Denote the collection of $r$ words, where each word consists of $r({2s-1})$ letters $O$ and $U$, by $V^r_{2s-1}$, and call it  $(r,{2s-1})$-crossing information string.

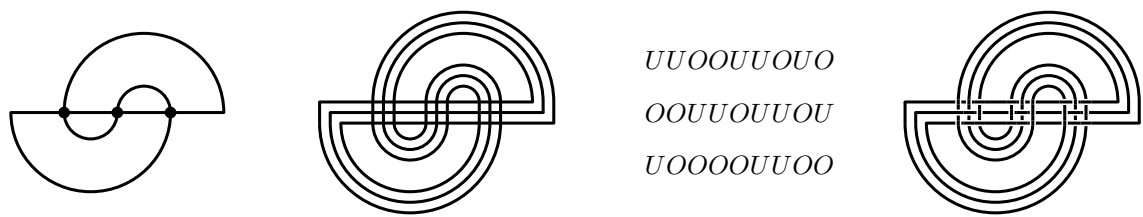
\begin{figure}[h]
\begin{center}
\begin{tikzpicture}[scale = .7]

\def \w {.4}

\begin{scope}[xshift = 0cm]

\draw [line width = \w mm, line cap=round] (0,0) to (4,0);

\draw [line width = \w mm] (3,0) arc [radius=.5, start angle=0, end angle= 180];
\draw [line width = \w mm] (4,0) arc [radius=1.5, start angle=0, end angle= 180];
\draw [line width = \w mm] (3,0) arc [radius=1.5, start angle=0, end angle= -180];
\draw [line width = \w mm] (2,0) arc [radius=.5, start angle=0, end angle= -180];

\foreach \a in {1,2,3}{
\draw [fill] (\a,0) circle [radius=0.1];
}

\end{scope}

\begin{scope}[xshift = 6cm]
\foreach \a in {0,.2,-.2}{
\draw [line width = \w mm, line cap=round] (-\a,\a) to (4-\a,\a);
}

\draw [line width = \w mm] (-.2,.2) to (-.2,-.2);
\draw [line width = \w mm] (0,0) to (0,-.2);

\draw [line width = \w mm] (4.2,.2) to (4.2,-.2);
\draw [line width = \w mm] (4,0) to (4,.2);

\foreach \b in {1,...,3}{
\foreach \a in {0,.2,-.2}{
\draw [line width = \w mm] (\a+\b,.2) to (\b+\a,-.2);
}
}

\foreach \a in {0,-.2,.2}{
\foreach \b in {.2}{
\draw [line width = \w mm] (3+\a,\b) arc [radius=.5+\a, start angle=0, end angle= 180];
\draw [line width = \w mm] (4+\a,\b) arc [radius=1.5+\a, start angle=0, end angle= 180];
}
\foreach \b in {-.2}{
\draw [line width = \w mm] (3+\a,\b) arc [radius=1.5+\a, start angle=0, end angle= -180];
\draw [line width = \w mm] (2+\a,\b) arc [radius=.5+\a, start angle=0, end angle= -180];
}
}

\end{scope}

\begin{scope}[xshift = 17cm]

\node [] at (-3.3,1) {\Small{$UUOOUUOUO$}};
\node [] at (-3.3,0) {\Small{$OOUUOUUOU$}};
\node [] at (-3.3,-1) {\Small{$UOOOOUUOO$}};

\foreach \a in {0,.2,-.2}{
\draw [line width = \w mm, line cap=round] (-\a,\a) to (4-\a,\a);
}

\draw [line width = \w mm] (-.2,.2) to (-.2,-.2);
\draw [line width = \w mm] (0,0) to (0,-.2);

\draw [line width = \w mm] (4.2,.2) to (4.2,-.2);
\draw [line width = \w mm] (4,0) to (4,.2);

\foreach \b in {1,...,3}{
\foreach \a in {0,.2,-.2}{
\draw [line width = \w mm] (\a+\b,.2) to (\b+\a,-.2);
}
}

\foreach \a in {0,-.2,.2}{
\foreach \b in {.2}{
\draw [line width = \w mm] (3+\a,\b) arc [radius=.5+\a, start angle=0, end angle= 180];
\draw [line width = \w mm] (4+\a,\b) arc [radius=1.5+\a, start angle=0, end angle= 180];
}
\foreach \b in {-.2}{
\draw [line width = \w mm] (3+\a,\b) arc [radius=1.5+\a, start angle=0, end angle= -180];
\draw [line width = \w mm] (2+\a,\b) arc [radius=.5+\a, start angle=0, end angle= -180];
}
}

\foreach \a in {1.2,1.8,2.8,3.2}{
\draw [line width = 2*\w mm, white] (\a - .1,.2) to (\a + .1,.2);
\draw [line width = \w mm, black] (\a - .1,.2) to (\a + .1,.2);
}

\foreach \a in {.8,1,2,2.2,3}{
\draw [line width = 2*\w mm, white] (\a,.25) to (\a,.15);
\draw [line width = \w mm] (\a,.25) to (\a,.15);
}

\foreach \a in {.8,1,2,3}{
\draw [line width = 2*\w mm, white] (\a - .1,0) to (\a + .1,0);
\draw [line width = \w mm, black] (\a - .1,0) to (\a + .1,0);
}

\foreach \a in {1.2,1.8,2.2,2.8,3.2}{
\draw [line width = 2*\w mm, white] (\a,.05) to (\a,-.05);
\draw [line width = \w mm] (\a,.05) to (\a,-.05);
}

\foreach \a in {1,1.2,1.8,2,3,3.2}{
\draw [line width = 2*\w mm, white] (\a - .1,-.2) to (\a + .1,-.2);
\draw [line width = \w mm, black] (\a - .1,-.2) to (\a + .1,-.2);
}
\foreach \a in {.8,2.2,2.8}{
\draw [line width = 2*\w mm, white] (\a,-.25) to (\a,-.15);
\draw [line width = \w mm] (\a,-.25) to (\a,-.15);
}

\end{scope}

\end{tikzpicture}
\caption{From left to right: an example of a graph $\Gamma_3$, a graph $\Gamma_3^3$, a crossing information string $V^3_3$, and the knot obtained as $(\Gamma_3^3, V^3_3)$.}
\label{fig:3random}
\end{center}
\end{figure}

The {\em $(r,{2s-1})$-meander link diagram or link} is the link diagram or the link  obtained by adding the crossing information string $V^r_{2s-1}$ to the graph $\Gamma^r_{2s-1}$ as above. We denote the set of all  $(r,{2s-1})$-meander link diagrams for a given $s$ and $r$ by $\mathcal{L}^r_{2s-1}$. Note that one link can be in several such sets for different $r, s$.

An example of a meander graph, an $(r,{2s-1})$-meander graph, a crossing information string, and the $(r,{2s-1})$-meander link diagram obtained from them is given in Figure \ref{fig:3random}.

A planar isotopy applied to a $(r,{2s-1})$-meander link diagram transforms it into a diagram called a \textit{potholder diagram} in \cite{EZHLN2}. We therefore can reformulate Theorem 1.4 from \cite{EZHLN2} as follows.

\begin{theorem}\label{links} Any link has an $(r,{2s-1})$-meander link diagram.
\end{theorem}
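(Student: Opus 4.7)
The plan is to reduce Theorem \ref{links} directly to Theorem 1.4 of \cite{EZHLN2} by establishing a precise dictionary between $(r,2s-1)$-meander link diagrams and potholder diagrams, so that the last sentence before the theorem statement is not just a remark but a genuine equivalence of diagram classes. The strategy has two parts: first show that every $(r,2s-1)$-meander link diagram is, up to planar isotopy, a potholder diagram (which is already asserted in the preceding paragraph), and second show the converse, that every potholder diagram arises in this way. Once this equivalence is in hand, the theorem follows immediately from the cited result.

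For the converse, I would start from an arbitrary potholder diagram, which I think of as an $n \times n$ grid of crossings whose horizontal strands and vertical strands are closed up by nested turn-backs along the four sides of the grid. The plan is to rotate the diagram so that the turn-backs on two opposite sides play the role of the upper and lower semicircles of a meander diagram, while the turn-backs on the other two sides, together with a horizontal line threading through all the grid rows, play the role of the axis after $r-1$ parallel push-offs. Concretely, I would designate the horizontal lines of the potholder as the $r$ parallel copies of the axis, designate the vertical lines as the $r$ parallel copies of the meander curve, and record the over/under data at the $r^2(2s-1)$ crossings as the $(r,2s-1)$-crossing information string $V^r_{2s-1}$. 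The integer $s$ is read off from the number of grid columns and the integer $r$ from the number of rows (or vice versa).

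The key technical step, and the place where I expect the main obstacle to lie, is checking that the system of turn-backs of a general potholder can be encoded by two $p$-strings of length $s$, i.e. by two correctly matched parenthesizations placed above and below the axis. In a standard potholder, the turn-backs on each side are the simplest nested pairs, which correspond to the $p$-string \texttt{(())(())\ldots(())} or \texttt{()()\ldots()} depending on how one groups the parallel strands into components; but to match the precise potholder convention of \cite{EZHLN2} one may need to absorb a few Reidemeister-II-like moves near the boundary and possibly enlarge $s$ to accommodate auxiliary semicircles introduced by the isotopy. I would argue that any such nesting of turn-backs is itself a legal $p$-string, so the correspondence is well defined for any grid size.

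Combining the two directions, the set of $(r,2s-1)$-meander link diagrams over all $r,s$ is exactly (up to planar isotopy) the set of potholder diagrams over all grid sizes. Applying \cite[Theorem 1.4]{EZHLN2} — which guarantees that every link has a potholder diagram — I conclude that every link $L$ has an $(r,2s-1)$-meander link diagram for some choice of $r$ and $s$ depending on $L$. This yields Theorem \ref{links}.
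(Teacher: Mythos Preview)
Your proposal is correct and takes essentially the same approach as the paper: the paper offers no proof at all beyond the sentence preceding the theorem, simply asserting that an $(r,2s-1)$-meander link diagram is planar isotopic to a potholder diagram and declaring the result a reformulation of \cite[Theorem~1.4]{EZHLN2}. Your write-up is actually more careful than the paper's, since you correctly isolate the direction that is logically needed (potholder $\Rightarrow$ meander) and note where the bookkeeping with $p$-strings must be checked; the paper glosses over this entirely.
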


Theorem \ref{links} means, in particular, that the set of all links is the same as the set of $(r,{2s-1})$-meander links  for all natural $r$ and $s$, up to link isotopy. If we set $r=1$, the set $\bigcup_{s=1}^\infty\mathcal{L}^1_{2s-1}$ contains the set of all knots by Theorem \ref{thm:AST}. It also contains some links with more than one component: see Figure \ref{fig:parenLink} for an example of a link in this set.

For positive integers $s$ and $r$, choose $\Gamma^r_{2s-1}$ and $V^r_{2s-1}$ uniformly at random from the set of all $(r,{2s-1})$-meander graphs and the set  all $(r,{2s-1})$-crossing information strings. The resulting $(r,{2s-1})$-meander link diagram or link is called a \textit{random $(r,{2s-1})$-meander link diagram or link}.

\section{Links with pierced circles}\label{SecCircles}

Here and further we consider link complements in $S^3$. When projected to a plane or $S^2$, certain fragments of a link diagram guarantee the the link is not trivial, i.e. is not an unlink in $S^3$. In meander link diagrams, we identify one such fragment below. Later in this section, we will find the mathematical expectation of such a fragment being present.

A \textit{pierced circle} is a fragment of a meander graph or link depicted in Figure \ref{fig:pierced}. The circle component has exactly two consecutive vertices or crossings on the axis. See Figure \ref{fig:parenLink}, middle and right, for an example of a meander graph and a meander link with a pierced circle.

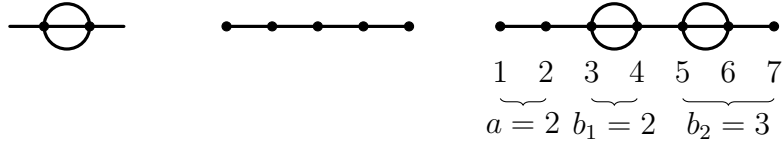
\begin{figure}[htbp]
\begin{center}
\begin{tikzpicture}[scale = .6]

\draw [very thick, line cap=round] (0.25,0) to (2.75,0);
\draw [very thick] (1,0) arc [radius=.5, start angle=-180, end angle= 180];

\foreach \a in {1,2, 5,6,7,8,9,11,12,13,14,15,16,17}{
\draw [fill] (\a,0) circle [radius=0.1];
}

\draw [very thick, line cap=round] (5,0) to (9,0);

\draw [very thick, line cap=round] (11,0) to (17,0);
\draw [very thick] (13,0) arc [radius=.5, start angle=-180, end angle= 180];
\draw [very thick] (15,0) arc [radius=.5, start angle=-180, end angle= 180];

\foreach \a in {1,...,7}{
\node at (10+\a,-1) {\a};
}

\draw[decoration={brace,mirror,raise=5pt},decorate]
  (11,-1.3) -- node[below=6pt] {$a=2$} (12,-1.3);

\draw[decoration={brace,mirror,raise=5pt},decorate]
  (13,-1.3) -- node[below=6pt] {$b_1=2$} (14,-1.3);
  
\draw[decoration={brace,mirror,raise=5pt},decorate]
  (15,-1.3) -- node[below=6pt] {$b_2=3$} (17,-1.3);

\end{tikzpicture}
\caption{From left to right: a pierced circle in a graph; the path graph $P_5$; a graph from $\mathcal{A}_7(2)$ on the right.}
\label{fig:pierced}
\end{center}
\end{figure}

We will determine the expected number of pierced circles in a random meander diagram.

Consider the path graph $P_v$, as in Figure \ref{fig:pierced}, middle.  Label the vertices 1 through $v$ from left to right. A pierced circle is \textit{at position} $i$ if the left vertex of the circle is labeled $i$, $1\leq i \leq v-1$. We will count the number of ways to add $k$ pierced circles to $P_v$ so that no two circles share a vertex. Let $\mathcal{A}_v(k)$ be the collection of the resulting graphs, with $k$ pierced circles added.

Recall that the number of integer ordered partititons (or compositions) of a nonegative integer $n$ into $k$ natural numbers is the number of ways to write $n$ as $a_1+a_2+\cdots +a_k$ for some natural numbers $a_1, a_2, \ldots, a_k \geq 1$. The quantity $|\mathcal{A}_v(k)|$ can be seen as the number of integer partititons of $v$ into one non-negative integer, say $a$, and $k$ integers that are two or greater, say $b_1, b_2, ..., b_k$, where $b_i\geq 2$, as follows. The non-negative integer $a$ represents the number of vertices before the position of the first pierced circle, which may be zero.  Now label $k$ pierced circles in $P_v$ by $1, 2, ..., k$ from left to right. For $m=1, ..., k-1,$ the integer $b_m$ is determined by the pair of $m$-th and $(m+1)$-th consecutive circles in the graph $P_v$, say located at positions $i$ and $j$, with $b_m=\vert i-j \vert$. And the last integer $b_k$ is the number of vertices from the position of the last circle, say position $q$, to the end of the path graph, as $v-q+1$. In Figure \ref{fig:pierced} with $v=7$ and $k=2$, we have $a=2, b_1=2, b_2=3$, i.e. $7 = 2 + 2 + 3$.

\begin{lemma}\label{lem:numKcircles}
There are ${v-k \choose k}$ graphs in the set $\mathcal{A}_v(k)$. 
\end{lemma}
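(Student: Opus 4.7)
The plan is to count integer tuples directly via stars-and-bars after applying the bijection sketched in the paragraph preceding the lemma. Given a configuration of $k$ non-overlapping pierced circles on $P_v$ at positions $1 \leq p_1 < p_2 < \cdots < p_k \leq v-1$ (with $p_{m+1} \geq p_m + 2$ so that consecutive circles share no vertex), the associated tuple $(a, b_1, \ldots, b_k)$ is recovered by $a = p_1 - 1$, $b_m = p_{m+1} - p_m$ for $m = 1, \ldots, k-1$, and $b_k = v - p_k + 1$. A telescoping sum confirms that $a + b_1 + \cdots + b_k = v$. I would then verify that the inverse map, which reads off the $p_m$ cumulatively from any tuple with $a \geq 0$ and each $b_m \geq 2$, reproduces a valid pierced-circle configuration, establishing that $|\mathcal{A}_v(k)|$ equals the number of such tuples.

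To count tuples $(a, b_1, \ldots, b_k)$ with $a + b_1 + \cdots + b_k = v$, $a \geq 0$, and each $b_m \geq 2$, I would substitute $c_m = b_m - 2$ so that the constraint becomes $a + c_1 + \cdots + c_k = v - 2k$ with all $k+1$ variables non-negative. A standard stars-and-bars argument then gives exactly $\binom{(v-2k) + k}{k} = \binom{v-k}{k}$ solutions, which is the claimed formula.

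I do not anticipate a serious obstacle, as the essential combinatorial reformulation is already carried out in the paragraph preceding the lemma. The only things worth double-checking are the degenerate regimes: when $v < 2k$ there is no room for $k$ disjoint pierced circles, and indeed $\binom{v-k}{k} = 0$; when $v = 2k$ the circles tile the path exactly and $\binom{v-k}{k} = \binom{k}{k} = 1$; and when $k = 0$ we get $\binom{v}{0} = 1$, counting $P_v$ itself. These boundary values match the combinatorial picture, which gives me confidence that the substitution and counting are correctly set up.
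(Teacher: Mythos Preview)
Your proposal is correct and follows essentially the same approach as the paper: both use the bijection described before the lemma to reduce to counting tuples $(a,b_1,\ldots,b_k)$ with $a\geq 0$, $b_m\geq 2$, and then apply a linear substitution plus stars-and-bars. The only cosmetic difference is that the paper shifts to variables that are all at least $1$ (via $a_1=a+1$, $a_{i+1}=b_i-1$) and invokes the formula $\binom{n-1}{j-1}$ for compositions of $n$ into $j$ positive parts, whereas you shift to variables that are all at least $0$ and invoke the non-negative stars-and-bars count; both land on $\binom{v-k}{k}$.
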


\begin{proof}

Above, we observed that $|\mathcal{A}_v(k)|$ is the number of ordered partitions of $v$ into  $a, b_1, b_2, ..., b_k$ where $a\geq 0$ and $b_i\geq 2$.  We relabel the summands by taking a bijective function $f$ that maps $a\mapsto a_1-1$ and $b_i \mapsto a_{i+1} +1$ for $i = 1,2, \ldots k$.  Then $a_i\geq 1$ for all $i$. Using this relabeling, we have

\begin{equation*} 
\begin{split}
v & = (a_1-1) + (a_2+1)+(a_3+1) +\cdots +(a_{k+1}+1)\\
v & = a_1+a_2+\cdots +a_{k+1} +k-1\\
v-k+1& = a_1+a_2+\cdots +a_{k+1}.
\end{split}
\end{equation*}
This an ordered partition of $v-k+1$ into $k+1$ natural numbers. But every such ordered partititon corresponds to an ordered partition of $v$ into $a, b_1, b_2, ..., b_k$, through the bijection $f$. Hence the number of such ordered partititons is the same as the number of ordered partitions of $v$ into  $a, b_1, b_2, ..., b_k$, and is equal to  $|\mathcal{A}_v(k)|$. In general, the number of ordered partititons of a nonegative integer $n$ into $j$ natural numbers is ${ n-1 \choose j-1}$ \cite{Stanley}. Therefore $|\mathcal{A}_v(k)|={v-k \choose k}$.

\end{proof}

Recall that the \textit{Catalan number} $C_s = \frac{1}{s+1}{2s \choose s}$ counts the number of valid strings of $s$ pairs of parentheses \cite{Stanley}, i.e. the number of $p$-strings of length $s$.

A $p$-string has $s$ pairs of parentheses, so $2s$ positions for a parenthesis, which is either left or right.  We call a matched pair of parentheses next to each other, \texttt{()}, a {\em nesting}. A nesting is \textit{at position} $i$ if the left parenthesis is at the $i$-th position in the $p$-string.  For example, consider the $p$-string of length 6:  \texttt{(())((()()))}. Replace the parentheses that are not a part of any nesting with a dash: \texttt{-()---()()--}.  We see that the positions of three nestings in this $p$-string are 2, 7, and 9.

\begin{lemma}\label{lem:Csminusj} Fix a $p$-string $P$ of length $s, s\geq 2,$ with $j$ nestings, $j\geq0$. Vary the parentheses in $P$ that do no belong to the nestings. The number of resulting $p$-strings is $C_{s-j}$. \end{lemma}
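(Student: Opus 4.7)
The plan is to construct a bijection between the valid $p$-strings obtained by varying the $2(s-j)$ non-nesting characters of $P$ and the set of all $p$-strings of length $s-j$, which is counted by $C_{s-j}$.

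For the forward direction, given any valid $p$-string $W$ produced in this way, let $\rho(W)$ be the length-$2(s-j)$ word formed by reading off the characters at the non-nesting positions of $P$ in order. I would show $\rho(W)$ is a valid $p$-string by examining a prefix of $\rho(W)$ ending at its $m$-th character, which comes from the $m$-th non-nesting position $q_m$ of $W$. Since $q_m$ is a non-nesting position, every nesting of $P$ that meets the prefix $W[1..q_m]$ is in fact fully contained in it (a nesting occupies two consecutive integer positions, neither of which is $q_m$), so each such nesting contributes zero to the excess of left over right parentheses. Hence the excess in the $\rho(W)$-prefix equals the excess in $W[1..q_m]$, which is nonnegative because $W$ is valid.

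For the inverse, given a valid $p$-string $Q$ of length $s-j$, fill the non-nesting positions of $P$ with the characters of $Q$ in order while keeping the $j$ nestings fixed, and call the resulting length-$2s$ word $W_Q$. To verify $W_Q$ is valid, I would decompose any prefix of $W_Q$ into some number of complete nestings (each contributing zero to the running excess), at most one incomplete nesting (contributing $+1$ from its leading ``$($''), and a prefix of $Q$ at the non-nesting positions encountered (contributing a nonnegative amount by validity of $Q$). The total is nonnegative, so $W_Q$ is a valid $p$-string. The maps $\rho$ and $Q \mapsto W_Q$ are mutually inverse by construction, hence the number of valid fillings is $C_{s-j}$. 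The only real bookkeeping issue is prefixes that cut through a nesting, and it is resolved cleanly in both directions by the simple observation that a non-nesting position cannot lie strictly inside a nesting.
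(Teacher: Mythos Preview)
Your proof is correct and uses essentially the same bijection as the paper: delete the $j$ fixed nestings to pass from length-$s$ strings to length-$(s-j)$ strings, and reinsert them to go back. The paper simply asserts that this is a bijection between valid $p$-strings, whereas you actually verify well-definedness in both directions via the prefix-excess argument; this extra care is welcome (the paper's proof is rather terse on exactly this point). One tiny omission: you check that every prefix of $\rho(W)$ has nonnegative excess but do not explicitly note that the total excess of $\rho(W)$ is zero; this follows immediately since $W$ is balanced and each fixed nesting contributes zero, so the remaining characters are balanced as well.
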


Note that each resulting $p$-string in the lemma contains nestings at the same positions as $P$, but also possibly other nestings.

\begin{proof} Let the positions for the nestings in $P$ be $x_1, x_2, ..., x_j$. Then there are $2(s-j)$ parentheses that are not part of a nesting in $P$. We replace each of them with a dash, as above. 

Now choose a $p$-string $P'$ of length $s-j$. Fill in the dashes of $P$ by substituting the parentheses of $P'$, in the same order as in $P'$. This gives a bijection between strings of length $s-j$ (like $P'$), and the strings we are counting in the lemma statement.  And there are $C_{s-j}$ distinct $p$-strings of length $s-j$. \end{proof}

Recall that a meander graph $\Gamma^1_{2s-1}$ has no added parallel strands, and exactly ${2s-1}$ vertices of valence 4. Let the set of such graphs that have exactly $k$ pierced circles be denoted by $\mathcal{E}_s(k)$, and let $|\mathcal{E}_s(k)|=E(s, k)$.

Let $\mathcal{O}_s(k)$ be a set of ordered triples 
$$\mathcal{O}_s(k) = \{ (A, P, Q)  |  A\in\mathcal{A}_{2s-1}(k), \text{ and } P,Q \text{ are $p$-strings of length } s-k  \}.$$ 
Denote $|\mathcal{O}_s(k)|=O(s, k)$.

We now establish an upper bound for the number $E(s, k)$

\begin{lemma}\label{newEandO}Given $s\geq 1$ and $k$ for $0\leq k\leq s$,
\begin{enumerate}
\item $O(s, k) =  {{2s-k-1 } \choose {k} } (C_{s-k})^2$\\
\item $O(s, k) =  \sum_{m=k}^s \ch{m}{k} E(s, m)$\\
\end{enumerate}
\end{lemma}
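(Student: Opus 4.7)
\emph{Part (1)} is immediate from the definition. Since $\mathcal{O}_s(k)$ is the Cartesian product of $\mathcal{A}_{2s-1}(k)$ with two independent copies of the set of $p$-strings of length $s-k$,
$$O(s,k) \;=\; |\mathcal{A}_{2s-1}(k)| \cdot C_{s-k}^{\,2}.$$
Applying Lemma \ref{lem:numKcircles} with $v = 2s-1$ gives $|\mathcal{A}_{2s-1}(k)| = \binom{2s-k-1}{k}$, which yields the claimed formula.

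\emph{Part (2)} I plan to prove by exhibiting a bijection
$$\Phi \colon \mathcal{O}_s(k) \;\longleftrightarrow\; \bigl\{(G,S) : G \in \mathcal{E}_s(m)\ \text{for some } k \leq m \leq s,\ S \subseteq \text{pierced circles of } G,\ |S|=k\bigr\}.$$
Partitioning the right-hand set according to $m$ gives cardinality $\sum_{m=k}^{s}\binom{m}{k}E(s,m)$, so the identity follows once $\Phi$ is exhibited.

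Given a triple $(A,P,Q) \in \mathcal{O}_s(k)$, the graph $A$ marks $k$ ordered positions $x_1 < \cdots < x_k$ along the $2s-1$ axis vertices at which pierced circles are to be placed. Each such position forces a nesting \texttt{()} at the corresponding pair of adjacent positions in both the upper and lower $p$-strings (of length $s$) of a meander graph $G$ to be built. The shorter strings $P$ and $Q$ then fill in the remaining $2(s-k)$ parenthesis positions in the upper and lower $p$-strings respectively, via the bijection implicit in the proof of Lemma \ref{lem:Csminusj}. Let $S$ be the set of $k$ pierced circles of $G$ located at $x_1, \dots, x_k$, and set $\Phi(A,P,Q)=(G,S)$. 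The inverse map reads $A$ off from $S$, then strips the $k$ nestings corresponding to $S$ from the upper and lower $p$-strings of $G$ to recover $P$ and $Q$ of length $s-k$, again by Lemma \ref{lem:Csminusj}.

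The main subtlety is verifying that $\Phi$ is well-defined: filling in $P$ and $Q$ may produce additional, unintended pierced circles when nestings in $P$ and $Q$ happen to align at the same axis position, or when new nestings are created at the boundary between marked and unmarked positions. These are harmless, as they are precisely what is tracked by the factor $\binom{m}{k}$, which counts the ways to single out a distinguished $k$-subset $S$ among the $m$ pierced circles of the resulting meander graph. Once bijectivity is checked, equating the cardinalities of the two sides yields the identity.
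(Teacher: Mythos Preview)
Your proof is correct and follows essentially the same approach as the paper. The paper phrases part (2) as a double-counting argument (each meander graph $\Gamma$ with exactly $m\ge k$ pierced circles is counted $\binom{m}{k}$ times by $\mathcal{O}_s(k)$), while you make the underlying bijection $\Phi$ explicit; these are the same argument, and your discussion of the ``extra'' pierced circles that may arise is exactly the paper's observation that a given $\Gamma$ corresponds to $\binom{m}{k}$ triples rather than one.
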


\begin{proof} For part (1), recall that $\mathcal{A}_{2s-1}(k)$ denotes the set of path graphs with $2s-1$ vertices and $k$ pierced circles. The number of such graphs $|\mathcal{A}_{2s-1}(k)|$ is given by Lemma \ref{lem:numKcircles}, once we substitute $v=2s-1$ in the lemma statement, and it is $|\mathcal{A}_{2s-1}(k)|={{2s-k-1 } \choose {k} }$.  Recall also that $\mathcal{O}_s(k)$ is the Cartesian product of $\mathcal{A}_{2s-1}(k)$, the set $\mathcal{P}$ of $p$-strings $P$ of length $s-k$, and one more such set $\mathcal{Q}$. Since $|\mathcal{P}|=|\mathcal{Q}|=C_{s-k}$ by Lemma \ref{lem:Csminusj}, we obtain:
\begin{equation}\label{EandO}
O(s, k) = |\mathcal{O}_s(k)|= {{2s-k-1 } \choose {k} } (C_{s-k})^2.
\end{equation}

For part (2), consider a meander graph $\Gamma = \Gamma^1_{2s-1}$ with exactly $j\geq k$ pierced circles. We claim that $O(s, k)$ counts $\Gamma$ exactly $ j \choose {k}$ times. Indeed, take a subgraph $A$ of $\Gamma$ by leaving $k$ of the $j$ pierced circles of $\Gamma$, and the axis, and deleting all other edges of $\Gamma$. The graph $A$ is an element of $ \mathcal{A}_s(k)$. There are exactly $j \choose k$ such distinct subgraphs of $\Gamma$. For each such subgraph $A$, there is exactly one pair of $p$-strings of length $s-k$ that, combined with $A$, yields $\Gamma$.  Thus there are $\ch{j}{k}$ elements of $\mathcal{O}_s(k)$ that all correspond to $\Gamma$. But the graphs like $\Gamma$, with $k$ pierced circles, make up the set $\mathcal{E}_s(k)$, with $|\mathcal{E}_s(k)|=E(s, k)$ by definition. Then for each $m$ with $k\leq m\leq s$, we sum all occurences of each such graph $\Gamma$, and obtain  
\begin{center}

$O(s, k) = E(s,k) + \ch{k+1}{k}E(s,k+1) +\cdots+ \ch{s}{k}E(s,s) = \sum_{m=k}^s \ch{m}{k} E(s, m)$.

\end{center}
 \end{proof}  


Note that Lemma \ref{newEandO}(2) implies that $O(s, k) \geq  E(s,k)$, with equality when $s=k$. We will also obtain a new relation between these quantities in the next lemma, that will be useful for us later.

The following lemma will be useful in the next section.

\begin{lemma}\label{Circles}

Given $s\geq 1$ and $k$ for $0\leq k\leq s$,
$$E(s, k) = \sum_{m=k}^s (-1)^{m+k}\ch{m}{k}O(s, m).$$

\end{lemma}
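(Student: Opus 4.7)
The statement is precisely the binomial (Möbius) inversion of the identity established in Lemma \ref{newEandO}(2), so my plan is to apply that inversion directly rather than to revisit the combinatorics of $\mathcal{E}_s(k)$ and $\mathcal{O}_s(k)$.

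First I would record the elementary identity
\begin{equation*}
\sum_{j=k}^{m} (-1)^{j-k} \binom{m}{j}\binom{j}{k} = [m=k],
\end{equation*}
whose proof is a one-line application of $\binom{m}{j}\binom{j}{k}=\binom{m}{k}\binom{m-k}{j-k}$ followed by the binomial theorem applied to $(1-1)^{m-k}$. I would include this as an explicit step since the paper has not yet invoked binomial inversion.

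Next I would substitute the formula $O(s,m) = \sum_{j=m}^{s}\binom{j}{m}E(s,j)$ from Lemma \ref{newEandO}(2) into the right-hand side of the claim. After swapping the order of summation (valid because the double sum is finite and runs over the triangular range $k \le m \le j \le s$), the inner sum becomes $\sum_{m=k}^{j}(-1)^{m-k}\binom{j}{m}\binom{m}{k}$, which by the identity above equals $[j=k]$. All terms of the outer sum therefore vanish except the term $j=k$, leaving exactly $E(s,k)$.

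There is no real obstacle here; the only thing to be careful about is the sign convention, since the statement uses $(-1)^{m+k}$ rather than $(-1)^{m-k}$ — these are of course equal because $2k$ is even, and I would note this briefly so the reader is not confused. I would conclude by stating that the result is a special case of binomial inversion applied to Lemma \ref{newEandO}(2).
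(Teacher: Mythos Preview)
Your proof is correct and is essentially the cleaner route. Both you and the paper reduce to the same binomial identity $\binom{m}{j}\binom{j}{k}=\binom{m}{k}\binom{m-k}{j-k}$ together with the alternating sum $\sum_t(-1)^t\binom{n}{t}=[n=0]$, but the organization differs. The paper proceeds by strong induction on $s-k$: it isolates $E(s,k)$ from Lemma~\ref{newEandO}(2), plugs in the inductive hypothesis for each $E(s,m)$ with $m>k$, and then regroups the resulting double sum by the $O(s,m)$ factor (this regrouping is long enough that it is pushed to an Appendix). You instead substitute Lemma~\ref{newEandO}(2) directly into the right-hand side of the claim and interchange the two finite sums, so the collapsing to $[j=k]$ happens in a single step with no induction. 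Your route buys brevity and makes transparent that the lemma is nothing more than binomial (M\"obius) inversion; the paper's route is more explicit but longer. Your remark that $(-1)^{m+k}=(-1)^{m-k}$ is worth keeping, as the paper silently uses the same fact.
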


\begin{proof}

Rearrange Lemma \ref{newEandO}(2) as follows:

\begin{equation}\label{Ek}
E(s, k)=O(s, k) - \sum_{m=k+1}^s \ch{m}{k} E(s, m).
\end{equation}

Apply strong induction on $s-k$ for a fixed $s$, that is, we fix $s$ and let $k$ decrease: $k=s, s-1, s-2, ..., 1, 0$. As noted before the lemma, $O(s, s) = E(s, s) = 0$. This is the base step.

Assuming the proposition statement holds for $ E(s,s), E(s,s-1), \ldots, E(s,k+1)$, we prove it for $E(s,k)$. Plugging in the formula from the proposition statement into equation (\ref{Ek}), we obtain

\begin{center}

$E(s, k)   = O(s, k) - \sum_{m=k+1}^s   \ch{m}{k} \sum_{j=m}^s (-1)^{m+j}\ch{j}{m}O(s, j)$.

\end{center}

Via rearranging and regrouping, we then have the following:

\begin{equation}\label{star}
E(s, k)=O(s, k) - \sum_{m=k+1}^s O(s, m) \ch{m}{k}   \left[ \sum_{i=k+1}^{m} (-1)^{m-i} \ch{m-k}{m-i}\right].\tag{$*$}
\end{equation}

The rearranging above is trivial, and we give full details, with sum expansions, in Appendix, as Claim \ref{claim}.

In the above formula, let $t=m-i$. Recall that the sum of alternating binomial coefficients is zero, i.e.  $ \sum_{t = 0}^{m-k} (-1)^{t} \ch{m-k}{t} = 0$.  Thus the inner sum on the right in (\ref{star}) becomes

$\sum_{t = 0}^{m-k-1} (-1)^{t} \ch{m-k}{t} =
\left[ \sum_{t = 0}^{m-k} (-1)^{t} \ch{m-k}{t} \right] -(-1)^{m-k} \ch{m-k}{m-k}  = (-1)^{m-k+1}$.

With this, (\ref{star}) is now

\begin{center}
$E(s, k)=O(s, k) - \sum_{m=k+1}^s \ch{m}{k}O(s, m)\left[(-1)^{m-k+1}\right]=\sum_{m=k}^s (-1)^{m+k}\ch{m}{k}O(s, m).$
\end{center}

	 \end{proof}

\begin{proposition}\label{circles}
For a fixed $s$, the expected number of pierced circles in $\Gamma^1_{2s-1}$ is $\mathbb{E} = \frac{ O(s, 1) }{C_s^2}.$
\end{proposition}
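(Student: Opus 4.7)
The plan is to unfold the definition of expectation over the uniform distribution on the sample space $\{\Gamma^1_{2s-1}\}$ and then recognize the resulting sum as a special case of Lemma \ref{newEandO}(2).

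First I would identify the sample space and its size. A meander graph $\Gamma^1_{2s-1}$ is determined by an upper $p$-string of length $s$ and a lower $p$-string of length $s$, so there are $C_s^2$ equally likely outcomes. Since $\mathcal{E}_s(k)$ is by definition the set of meander graphs $\Gamma^1_{2s-1}$ with exactly $k$ pierced circles, the probability that the random graph has exactly $k$ pierced circles is $E(s,k)/C_s^2$. Hence, writing $X$ for the number of pierced circles,
\begin{equation*}
\mathbb{E}[X] \;=\; \sum_{k=0}^{s} k \cdot \frac{E(s,k)}{C_s^2} \;=\; \frac{1}{C_s^2}\sum_{k=0}^{s} k\, E(s,k).
\end{equation*}

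Next I would invoke Lemma \ref{newEandO}(2) with the parameter specialized to $k=1$. That lemma gives
\begin{equation*}
O(s,1) \;=\; \sum_{m=1}^{s} \binom{m}{1} E(s,m) \;=\; \sum_{m=1}^{s} m\, E(s,m),
\end{equation*}
and the $m=0$ term vanishes trivially, so this equals $\sum_{m=0}^{s} m\, E(s,m)$. Substituting into the previous display yields $\mathbb{E}[X]=O(s,1)/C_s^2$, as claimed.

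There is essentially no obstacle here: the work is entirely in Lemma \ref{newEandO}(2), which has already been established. The only thing worth being explicit about is the identification of the uniform measure on $\Gamma^1_{2s-1}$ with the uniform measure on pairs of $p$-strings of length $s$, which is immediate from the construction of meander graphs from two $p$-strings in Section \ref{sec:model}.
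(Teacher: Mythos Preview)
Your argument is correct, but it is not the route the paper takes. The paper introduces indicator variables $X_i$ for the event that a pierced circle sits at position $i$, computes $P(X_i=1)=(C_{s-1}/C_s)^2$ via Lemma~\ref{lem:Csminusj}, and then sums over the $2s-2$ positions by linearity of expectation; only at the end does it recognise $(2s-2)C_{s-1}^2$ as $O(s,1)$ from formula~(\ref{EandO}). You instead write the expectation as $\sum_k k\,E(s,k)/C_s^2$ and recognise the numerator as the $k=1$ case of Lemma~\ref{newEandO}(2). Your route is shorter and cleanly recycles an identity already proved, at the cost of relying on that combinatorial lemma; the paper's route is more self-contained and yields the extra datum $P(X_i=1)=(C_{s-1}/C_s)^2$, which makes transparent why the closed form $(2s-2)(C_{s-1}/C_s)^2$ arises. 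The two arguments are of course dual: summing indicators over positions gives $\sum_i P(X_i=1)\cdot C_s^2=\sum_m m\,E(s,m)$ by double counting, which is precisely Lemma~\ref{newEandO}(2) at $k=1$.
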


\begin{proof}
For a random graph $\Gamma^1_{2s-1}$, introduce an indicator random variable $X_i$ that takes the value $1$ when a pierced circle appears at a position $i$ in $\Gamma^1_{2s-1}$, and value 0 otherwise. There are $2s-2$ possible positions for the pierced circle in a graph $\Gamma^1_{2s-1}$ by construction. Note that the variables $X_1, X_2, ... X_{2s-2}$ are not independent: e.g. if a pierced circle appearred at a position 1, it cannot appear at a position 2 in the same graph. 

We claim that the probability $P(X_i=1)=(C_{s-1}/C_s)^2$. Indeed, the graph $\Gamma^1_{2s-1}$ corresponds to two $p$-strings, and each string must have at least one nesting, at the position $i$. By Lemma \ref{lem:Csminusj}, the number of options for two distinct $p$-strings, with one nesting each, is $C_{s-1}^2$, and the number of options for any two distinct $p$-strings is  $C_s^2$.

Now by the linearity of expectation, the expectation of the number of pierced circles in  $\Gamma^1_{2s-1}$ is $\mathbb{E}=P(X_1=1)+P(X_2=1)+...+P(X_{2s-2}=1)=(2s-2)(C_{s-1}/C_s)^2$. According to formula (\ref{EandO}), $(2s-2)C^2_{s-1}=O(s, 1)$, and the expectation is $\mathbb{E}=\frac{ O(s, 1) }{C_s^2}$.

\end{proof}

From a straightforward simplification of the definitions, the expectation has the nice closed form, $  \frac{ O(s, 1) }{C_s^2} = \frac{s^3+s^2-s-1}{8s^2-8s+2}$.   This asymptotically approaches $ \frac{s+2}{8}$ from above for large $s$.  In fact, by $s=6$, the error between $ \frac{s+2}{8}$ and $ \frac{ O(s, 1) }{C_s^2}$ is less than $0.013$, and we expect about one pierced circle.

\section{Unlinks are rare}\label{Unlinks}

A natural question to ask is whether the new model is likely to produce a non-trivial link.  Our goal now is to show that the random links generated in our model are nontrivial.  We will do this by first showing that as $s \to \infty$ for a link $\Gamma_{2s-1}^1$, the number of corresponding link diagrams with no pierced circles is small compared to the total number of diagrams.

A hypergeometric series is a series of the form $\sum t_k$, where the ratio of consecutive terms $\frac{t_{k+1}}{t_k}$ is a rational function of $k$.   To prove Proposition \ref{thm:circlesPresent}, we will consider such series $\frac{E(s, 0)}{C_s^2}$, which is the ratio of the number of meander graphs with no pierced circles to all meander graphs for fixed $s$. We will obtain a recurrence relation for $E(s, 0)$.  Once we have a recurrence relation, we can apply the following classic result of Poincar\'e to show that this ratio becomes small, and therefore almost all meander graphs have a pierced circle. 

Suppose $\{u_n\}$ is a number sequence for natural indices $n$. Consider a linear recurrence relation in $k+1$ terms of this sequence for some fixed positive natural $k$, i.e. $\alpha_{0, n}u_{n+k} +\alpha_{1,n}u_{n+k-1} +\alpha_{2,n}u_{n+k-2} +\cdots +\alpha_{k,n}u_{n}+c =0$. Here $c$ and $\alpha_{i,n} \in \mathbb{R}$ for $i=0, 1, ..., k$ are coefficients, and the relation holds for all natural $n$. We can assume $\alpha_{0, n}=1$. In \cite{Poincare}, such a relation is called a difference equation. It is called homogeneous if the constant term $c=0$. We will use the following theorem.

 In what follows, $u=(u_n, ...., u_{n+k})$.

\begin{theorem}\label{thm:poincare}{ [{Poincar\'e} \cite{Poincare}]}
Suppose the coefficients $\alpha_{i,n}$ for $i = 1, 2, \ldots, k$, of a linear homogeneous difference equation 
\begin{equation}\label{eq:poincare}
u_{n+k} +\alpha_{1,n}u_{n+k-1} +\alpha_{2,n}u_{n+k-2} +\cdots +\alpha_{k,n}u_{n} =0
\end{equation}
\noindent have limits $\lim_{n\to \infty} \alpha_{i,n}= \alpha_i$ for $i = 1, 2, \ldots, k$, and the roots $\lambda_1, \ldots, \lambda_k$ of the characteristic equation $t^k+\alpha_1t^{k-1}+\cdots + \alpha_k = 0$ have distinct absolute values. Then for any solution $u$ of equation (\ref{eq:poincare}), either $u_n = 0 $ for all sufficiently large $n$ or $\lim_{n\to\infty} \frac{u_{n+1}}{u_n} = \lambda_i$ for some $i$.
\end{theorem}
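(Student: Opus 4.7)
The plan is to vectorize the scalar recurrence and analyze the resulting first-order linear system as a small perturbation of an autonomous one whose eigenvalues are $\lambda_1, \ldots, \lambda_k$. First I would set $\phi_n = (u_n, u_{n+1}, \ldots, u_{n+k-1})^{\top}$, so that $\phi_{n+1} = A_n \phi_n$, where $A_n$ is the companion matrix of the polynomial $t^k + \alpha_{1,n}t^{k-1} + \cdots + \alpha_{k,n}$. The hypothesis $\alpha_{i,n} \to \alpha_i$ gives entrywise convergence $A_n \to A$, with $A$ the companion matrix of the limit characteristic polynomial.

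Since the $\lambda_i$ have distinct absolute values, they are in particular distinct, so $A$ is diagonalizable. Choose eigenvectors $v_i$ with $Av_i = \lambda_i v_i$ and let $P = [v_1 \mid \cdots \mid v_k]$, so that $P^{-1} A P = D = \mathrm{diag}(\lambda_1, \ldots, \lambda_k)$. Setting $w_n = P^{-1} \phi_n$ turns the system into the perturbed diagonal recursion $w_{n+1} = (D + E_n) w_n$, where $E_n = P^{-1}(A_n - A) P \to 0$. After relabeling so that $|\lambda_1| > |\lambda_2| > \cdots > |\lambda_k|$, the spectral gaps $|\lambda_{i+1}|/|\lambda_i| < 1$ become the quantitative resource driving the asymptotic analysis.

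The core step is to show that for each coordinate $w_n^{(i)}$, either $w_n^{(i)}$ is eventually zero, or $w_{n+1}^{(i)}/w_n^{(i)} \to \lambda_i$. I would proceed by downward induction on $i$: pick the largest index $i^{*}$ such that $w_n^{(i^{*})} \neq 0$ infinitely often, and prove via a telescoping comparison that the ratios $|w_n^{(j)}|/|w_n^{(i^{*})}|$ remain bounded for $j < i^{*}$ once $n$ is large enough. Because $\|E_n\| \to 0$, the perturbation's contribution to $w_{n+1}^{(i^{*})}$ is then $o(|\lambda_{i^{*}}| \, |w_n^{(i^{*})}|)$, and the dominant-mode ratio $\lambda_{i^{*}}$ can be read off. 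Since $u_n$ is the first coordinate of $\phi_n = P w_n$ and this coordinate is asymptotically governed by the $i^{*}$th mode, we obtain $u_{n+1}/u_n \to \lambda_{i^{*}}$ unless $u_n$ vanishes eventually.

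The main obstacle is ruling out that the perturbation $E_n$ causes the solution to oscillate between growth rates corresponding to different eigenvalues, or to ``switch'' dominant modes infinitely often. This is precisely where the distinct-moduli hypothesis is essential: the spectral gap provides a fixed multiplicative cushion that, combined with $\|E_n\| \to 0$, underpins a Lyapunov-type inequality ensuring that once a coordinate has become dominant it cannot be overtaken. Making this comparison quantitative, and handling the possibly slow convergence of $E_n$, is the delicate technical heart of Poincar\'e's argument; the rest reduces to linear algebra and bookkeeping between $u_n$, $\phi_n$, and $w_n$.
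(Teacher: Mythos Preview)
The paper does not prove this theorem at all; it is quoted as a classical result of Poincar\'e and used as a black box in the proof of Proposition~\ref{thm:circlesPresent}. So there is no ``paper's own proof'' to compare your sketch against.

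That said, your outline has the right overall architecture (companion matrix, diagonalization of the limit, perturbation of a diagonal system), but the core step as you describe it does not work. Your claim that ``for each coordinate $w_n^{(i)}$, either $w_n^{(i)}$ is eventually zero, or $w_{n+1}^{(i)}/w_n^{(i)} \to \lambda_i$'' is false in general: the coupling through $E_n$ feeds the faster-growing modes into the slower ones, so a subdominant coordinate typically inherits the growth rate of the dominant one rather than its own $\lambda_i$. Relatedly, your definition of $i^{*}$ as ``the largest index such that $w_n^{(i^{*})} \neq 0$ infinitely often'' is vacuous, since for a generic perturbation every coordinate is nonzero for all large $n$; and with your ordering $|\lambda_1| > \cdots > |\lambda_k|$, the boundedness you assert for $|w_n^{(j)}|/|w_n^{(i^{*})}|$ with $j < i^{*}$ goes the wrong way (those are the faster modes). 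The correct identification of the governing eigenvalue proceeds instead by tracking a quantity like $\max_j |w_n^{(j)}|$ and showing that the index realizing the maximum eventually stabilizes, or equivalently by analyzing $\limsup |u_n|^{1/n}$ and matching it to some $|\lambda_i|$; this is where the distinct-moduli hypothesis actually enters, via a discrete Gronwall-type comparison across the spectral gaps. Your final paragraph correctly flags this as the delicate point, but the preceding concrete plan would need to be reworked along these lines.
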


\begin{lemma}\label{lem:Zeilbergers}
The sequence $\{E(s, 0) \}_{s=1}^\infty$ satisfies the recurrence relation $$\sum_{k=0}^3 P_k(s) E(s+k, 0) = 0,$$ with polynomials $P_k$ given by

\begin{table}[ht]
\begin{center}
\begin{tabular}{ l c l }
$P_0(s) = 2 s^3+s^2-8 s+5$, & &$P_1(s) = -26 s^3-93 s^2-82 s-30$,\\
$P_2(s) = -26 s^3-141 s^2-226 s-81$,  & & $P_3(s)  = 2 s^3+17 s^2+40 s+16$.
\end{tabular}
\end{center}
\end{table}%

\end{lemma}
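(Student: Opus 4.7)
The plan is to apply Zeilberger's creative telescoping algorithm to an explicit hypergeometric sum representation of $E(s,0)$. Specializing Lemma \ref{Circles} at $k=0$ and substituting the formula from Lemma \ref{newEandO}(1) gives
$$E(s,0) \;=\; \sum_{m\ge 0} f(s,m), \qquad f(s,m) \;=\; (-1)^m \binom{2s-m-1}{m} C_{s-m}^2,$$
where $f(s,m)$ vanishes outside $0\le m\le s-1$ so the sum is effectively finite. Writing $C_{s-m}=\frac{1}{s-m+1}\binom{2(s-m)}{s-m}$, a direct check shows that both $f(s+1,m)/f(s,m)$ and $f(s,m+1)/f(s,m)$ are rational functions of $s$ and $m$, so $f$ is a proper hypergeometric term and Wilf--Zeilberger theory applies.

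The next step is to invoke Zeilberger's algorithm. For such an $f$, the algorithm produces an order $d$, polynomials $P_0(s),\ldots,P_d(s)$, and a rational function $R(s,m)$ such that, with $G(s,m)=R(s,m)f(s,m)$, the certificate identity
$$\sum_{k=0}^{d} P_k(s)\,f(s+k,m) \;=\; G(s,m+1)-G(s,m)$$
holds as a rational function identity in $s$ and $m$. Summing over $m$ telescopes the right side to zero, because the finite support of $f$ in $m$ kills the boundary contribution, yielding $\sum_{k=0}^{d} P_k(s)\,E(s+k,0)=0$. Running the algorithm via a programmed worksheet (as announced in the introduction) returns exactly $d=3$ together with the four cubic polynomials $P_0,P_1,P_2,P_3$ printed in the lemma statement, and an explicit $R(s,m)$. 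Once $R$ is in hand, the entire recurrence reduces to a mechanical check: substitute into the certificate identity, clear denominators, and verify a polynomial identity in $s,m$ by direct expansion.

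The main obstacle is purely computational. The summand mixes a binomial coefficient with a squared Catalan number, which inflates the degrees of the intermediate rational functions produced by Gosper's step inside Zeilberger's algorithm, so the certificate $R(s,m)$ is sizable and tedious to produce by hand. Verifying by hand that third order suffices, and that no smaller-order recurrence with polynomial coefficients exists, is similarly awkward. This is exactly the setting where a programmed implementation of Zeilberger's algorithm is indispensable, and is why the introduction advertises its use. The one subtle point, boundary behaviour under the telescoping, is immediate here because $f(s,m)=0$ outside $0\le m\le s-1$, so no boundary terms survive after summation; and the certificate identity itself, once produced, is a finite polynomial identity in $s$ and $m$ that can be checked rigorously.
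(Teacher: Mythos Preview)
Your proposal is correct and follows essentially the same approach as the paper: write $E(s,0)$ as the alternating hypergeometric sum $\sum_m (-1)^m \binom{2s-m-1}{m} C_{s-m}^2$ via Lemmas~\ref{Circles} and~\ref{newEandO}(1), then apply Zeilberger's algorithm (the paper invokes Maple's \texttt{Zeilberger} command) to produce the four polynomials and a certificate $G$, and telescope. The only point the paper handles slightly more carefully is the boundary term: it checks $G(s,s+4)-G(s,0)=0$ by direct inspection of the explicit certificate rather than inferring it from the finite support of $f$ alone, which matters since $G=R\cdot f$ could in principle fail to vanish if $R$ had compensating poles.
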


\begin{proof}
This was verified using the computer algebra software {Maple}\texttrademark\ \cite{Maple} by applying command \texttt{Zeilberger} from subpackage \texttt{SumTools[Hypergeometric]}.

For a short explanation of Zeilberger's algorithm, recall that we are looking for a recurrence relation on $E(s, 0)$. By Lemma \ref{Circles}, $E(s, 0)=O(s, 0) - O(s, 1)  +O(s, 2)-\cdots +(-1)^s O(s, s)$. Recall also that $O(s, k) =  {{2s-k-1 } \choose {k} } (C_{s-k})^2$ by formula (\ref{EandO}). The polynomials $P_0, P_1, P_2, P_3$ are obtained and verified using Zeilberger's Algorithm, described, for example, in \cite{aeqb}. In particular, the command \texttt{Zeilberger} returns these polynomials and a certificate verifying the calculation. We will outline the mathematics behind the command \texttt{Zeilberger} and the verification, to show that this leads to a rigorous proof.

Using the {Maple}\texttrademark\   notation, the command  \texttt{Zeilberger}  takes as input the function $T(s,m) = (-1)^m O(s, m)$, so that $E(s, 0)=\sum_{m=0}^s T(s,m) $. The command then outputs the functions $L$ and $G$, which we describe later. By  $x = x(s)$ the {\em shift} operator is denoted, that is, a function $x$ of $s$ such that $xT(s,m) = T(s+1,m)$. In our notation, $x(-1)^m O(s, m) = (-1)^m O(s+1, m)$.  One can solve for $x$ without difficulty, but we only need the existence of such $x$ for the proof.  

It is also important for the final simplification below that we can extend the definition of $E(s, 0)$ as follows.  The sum $E(s, 0)$ has summands that range from $m=0$ to $s$. We can increase the number of summands by increasing the largest value of $m$. This will not change $E(s, 0)$, since the terms $O(s, m)$ are zero if $m\geq s$.  Thus, in the remaining part of the proof we may consider $E(s, 0), E(s+1, 0), E(s+2, 0)$, and $E(s+3, 0)$ to have summands up to $m=s+3$.  Then we obtain, using the shift operator $x$,

\begin{center}
$\sum_{k=0}^3 P_k(s) E(s+k, 0) =  \sum_{k=0}^3 \left( P_k(s) \sum_{m=0}^{s+3}(-1)^m O(s+k, m) \right) = $

$=   \sum_{m=0}^{s+3}\left( \sum_{k=0}^3  P_k(s)(-1)^m O(s+k, m) \right) =   \sum_{m=0}^{s+3}\left( \sum_{k=0}^3  x^kP_k(s)(-1)^m O(s, m) \right) =$

$=   \sum_{m=0}^{s+3}\left( (-1)^m O(s, m) \sum_{k=0}^3  x^kP_k(s)\right)$ 
\end{center}

One part of the output of \texttt{Zeilberger} command is $L = P_0(s) + xP_1(s) +x^2P_2(s) + x^3P_3(s) $ where $P_k$ are the four explicit polynomials Zeilberger's algorithm provides us. The other part of the output is $G$, which denotes a function $G(s,m)$ such that $L(T(s,m) ) = G(s,m+1) - G(s,m)$. We exclude the explicit expression for $G$ for brevity, but it is provided by Maple. See \cite{MapleFile} for the explicit expression and the details of the computation. The function $G$ is what allows us to creatively telescope.  In particular, for $0\leq m \leq s+3$,

\begin{center}

$\sum_{m=0}^{s+3}\left( (-1)^m O(s, m) \sum_{k=0}^3  x^kP_k(s)\right)  =   \sum_{m=0}^{s+3}\left( T(s,m) L \right) =$

$= \sum_{m=0}^{s+3}\left( G(s,m+1)-G(s,m)\right) =    G(s,s+4)-G(s,0) = 0$,
\end{center}

where the last equality follows from inspecting the explicit expression for $G$.  \end{proof}

Now we have all the pieces for the next theorem.

\begin{proposition}\label{thm:circlesPresent}
The ratio of $(1, 2s-1)$-meander graphs without any pierced circles to all $(1, 2s-1)$-meander graphs in the random meander model asymptotically approaches zero, i.e. $\frac{E(s, 0)}{C_s^2} \to 0$ as $s\to \infty$.
\end{proposition}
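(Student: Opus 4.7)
The plan is to invoke Poincaré's Theorem (Theorem \ref{thm:poincare}) on the recurrence from Lemma \ref{lem:Zeilbergers} to pin down the asymptotic growth rate of $E(s,0)$, and then to compare that rate with the known growth rate of $C_s^2$.

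First I would normalize the three-term recurrence of Lemma \ref{lem:Zeilbergers} so that the leading term $E(s+3,0)$ has coefficient $1$, dividing through by $P_3(s) = 2s^3 + 17s^2 + 40s + 16$, which is strictly positive for every $s \ge 0$. Writing $\alpha_{i,s} = P_{3-i}(s)/P_3(s)$, the limits are controlled by the ratios of leading cubic coefficients, yielding $\alpha_1 = -13$, $\alpha_2 = -13$, $\alpha_3 = 1$. The associated characteristic polynomial is then
\[
t^3 - 13\,t^2 - 13\,t + 1.
\]
One checks that $t = -1$ is a root, and the factorization $(t+1)(t^2 - 14\,t + 1)$ identifies the remaining roots as $7 \pm 4\sqrt{3}$. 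The three absolute values are $1$, $7 - 4\sqrt{3} \approx 0.072$, and $7 + 4\sqrt{3} \approx 13.93$, which are pairwise distinct, so the hypotheses of Theorem \ref{thm:poincare} are satisfied.

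By Poincaré's Theorem, either $E(s, 0) = 0$ for all sufficiently large $s$ (in which case the ratio $E(s,0)/C_s^2$ is eventually zero and there is nothing further to prove), or else $\lim_{s\to\infty} E(s+1, 0)/E(s, 0)$ exists and equals one of $-1$, $7 - 4\sqrt{3}$, or $7 + 4\sqrt{3}$. Since $E(s,0) \ge 0$ is a count, the negative root $-1$ is ruled out in the nontrivial case, and so the limit $\lambda$ lies in $\{7 - 4\sqrt{3},\, 7 + 4\sqrt{3}\}$. In every case $\lambda \le 7 + 4\sqrt{3} < 16$.

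To finish, I would invoke the standard asymptotic $C_s \sim 4^s/(s^{3/2}\sqrt{\pi})$, which gives $C_{s+1}^2/C_s^2 \to 16$. Consequently the ratio of consecutive values of the sequence $E(s,0)/C_s^2$ converges to $\lambda/16 < 1$, so $E(s,0)/C_s^2$ is eventually dominated by a geometric sequence with ratio strictly less than $1$ and therefore tends to $0$. The heavy lifting has been done already by Zeilberger's algorithm (Lemma \ref{lem:Zeilbergers}) and by Poincaré's Theorem; the only points that require care are verifying the factorization of the characteristic polynomial and the strict inequality $7 + 4\sqrt{3} < 16$ (equivalent to $48 < 81$). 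Neither is difficult, so I do not anticipate a genuine obstacle beyond matching the recurrence to Poincaré's form and confirming the distinct-absolute-value hypothesis.
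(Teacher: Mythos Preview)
Your proposal is correct and follows essentially the same route as the paper's own proof: normalize the recurrence from Lemma~\ref{lem:Zeilbergers}, apply Poincar\'e's Theorem to obtain the characteristic polynomial $t^3-13t^2-13t+1$ with roots $-1,\,7\pm4\sqrt{3}$, and compare the resulting growth rate of $E(s,0)$ against $C_{s+1}^2/C_s^2\to16$ via the ratio test. The only cosmetic differences are that you factor the cubic explicitly and rule out the root $-1$ by nonnegativity, whereas the paper simply notes that all three roots are below $16$; both arguments are valid.
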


\begin{proof} Let $a_s = \frac{ E(s, 0)  }{C_s^2}$.  We will use the ratio test for sequences and show that $\lim_{s\to\infty} \frac{a_{s+1}}{a_{s}}<1$.  It is straight forward to see that $\lim_{s\to\infty} \frac{C_{s}}{C_{s+1}} = \lim_{s\to\infty} \frac{s+2}{2(2s+1)}=\frac{1}{4}$, and notice that
$$\frac{a_{s+1}}{a_{s}} = \frac{E(s+1, 0)  {C_s^2}}{E(s, 0)  C_{s+1}^2}.$$

This allows us to reduce our problem to showing that
\begin{equation}\label{eq:ratio}
\lim_{s\to\infty} \frac{E(s+1, 0)}{E(s, 0)} < 16.
\end{equation}

We divide the recurrence relation from  Lemma \ref{lem:Zeilbergers} by $P_3(s)$. 

$$ E(s+3, 0)+ \frac{P_2(s)}{P_3(s)}E(s+2, 0)+\frac{P_1(s)}{P_3(s)}E(s+1, 0)+\frac{P_0(s)}{P_3(s)}E(s, 0) = 0 .$$

The above is a linear homogeneous difference equation in $u_s= E(s, 0) $ with 4 terms. We can apply Poincar\'e's theorem to it, with $\alpha_{1,s}=\frac{P_{2}(s)}{P_3(s)} , \alpha_{2,s}=\frac{P_{1}(s)}{P_3(s)}$, and $\alpha_{3,s}=\frac{P_{0}(s)}{P_3(s)}$.  Then, as defined in the statement of Poincar\'e's theorem, $ \alpha_i = \lim_{s\to\infty} \alpha_{i,s} $ and via basic calculus, we see $\alpha_1 = -13, \alpha_2 = -13, $ and $\alpha_3 = 1$.  Thus, the characteristic equation of this recurrence is $t^3-13t^2-13t+1=0$.

 There are three real solutions to the characteristic equation, $z=-1, 7-4\sqrt{3}\approx 0.0718,$ and $7+4\sqrt{3} \approx 13.928$, all of which are less than 16.  Theorem \ref{thm:poincare} states that $\lim_{s\to \infty} \frac{E(s+1, 0)}{E(s, 0)}$ converges to zero or one of the solutions of the characteristic equation, which verifies the inequality (\ref{eq:ratio}), finishing the proof. \end{proof}

Consider a single circle $C$ in a diagram, with $r$ horizontal strands crossing it, {as in} Figure \ref{fig:unlinks}.  In our model, the $r$ horizontal strands are parallel copies of a knot. The crossing information is chosen at random.

\begin{figure}[htbp]
\begin{center}
\begin{tikzpicture}[scale = .5]

\foreach \a in {.5}{
\foreach \b in {1.3}{
\draw [line width = \a mm, line cap=round] (0,.75) to (4,.75);
\draw [line width =  \a mm, line cap=round] (0,-.75) to (2,-.75);

\draw [line width =  \b mm, white] (3.5,0) arc [radius=1.5, start angle=0, end angle= 360];
\draw [line width =  \a mm] (3.5,0) arc [radius=1.5, start angle=0, end angle= 360];

\draw [line width =  \b mm, line cap=round, white] (0,0) to (4,0);
\draw [line width =  \a mm, line cap=round] (0,0) to (4,0);

\draw [line width =  \b mm ,white] (2,-.75) to (4,-.75);
\draw [line width =  \a mm, line cap=round] (2,-.75) to (4,-.75);

\node at (3.25,2) {$C$};

}
}

\end{tikzpicture}
\caption{The circle $C$ meeting three parallel copies of the axis {component}. The top two are {not linked with other link components shown, while} the bottom one is linked {with $C$}. }
\label{fig:unlinks}
\end{center}
\end{figure}
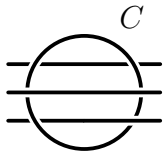

\begin{lemma}\label{lem:srlink}
The component $C$, which is a circle in a meander link diagram, is unlinked from other components in the $(s,r)$-random link with probability $\frac{1}{2^r}$.
\end{lemma}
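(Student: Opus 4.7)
The plan is to reduce the claim to an independent coin-flip count at the $2r$ crossings between $C$ and the $r$ parallel axis strands threading through it. First I would describe the local picture at $C$: since $C$ comes from a pierced circle, in the underlying meander graph it meets the axis at exactly two consecutive crossings, and these are its only crossings with any other component. In the $(r,2s-1)$-parallel construction $C$ is one of $r$ nested concentric copies, and each such copy meets exactly $r$ parallel copies of the axis at $2r$ points total, two per axis strand. The remaining $r-1$ concentric copies of $C$ are themselves unlinked from $C$ in the plane (they are coplanar circles), so only the axis strands can contribute any linking with $C$.

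For each axis strand $\sigma_i$ ($1 \le i \le r$), let $A_i$ be the event that the two crossings of $\sigma_i$ with $C$ are assigned the same type: either both $O$ (``$\sigma_i$ over $C$'') or both $U$ (``$\sigma_i$ under $C$''). Geometrically, $A_i$ holds if and only if $\sigma_i$ can be locally isotoped off the flat disk bounded by $C$; equivalently, $\mathrm{lk}(\sigma_i, C)=0$ and $\sigma_i$ does not thread through that disk. Since the crossing information is drawn uniformly and independently at each vertex, the two crossings of $\sigma_i$ with $C$ give four equally likely assignments $OO, OU, UO, UU$, of which two are matching, so $P(A_i) = \tfrac{1}{2}$. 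Because the $2r$ crossings are assigned independently, the events $A_1, \ldots, A_r$ are mutually independent.

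The circle $C$ is unlinked from all other components exactly when the visible flat disk bounded by $C$ can be isotoped off every axis strand, which is the intersection $\bigcap_{i=1}^r A_i$. By independence,
$$P\Bigl(\bigcap_{i=1}^r A_i\Bigr) = \prod_{i=1}^r P(A_i) = \frac{1}{2^r}.$$
The one point to be careful about is the geometric equivalence used above: one direction is obvious (matching crossings let the disk slide off), while the converse uses that an axis strand with mismatched crossings at $C$ has $|\mathrm{lk}(\sigma_i, C)| = 1$ and is therefore genuinely linked with $C$, so a single failure of any $A_i$ already breaks the unlinkedness. This is the only nontrivial step; the probability calculation is then immediate.
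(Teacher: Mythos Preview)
Your proof is correct and follows essentially the same approach as the paper's own argument: for each of the $r$ axis strands, the two crossings with $C$ give four equally likely patterns, half of which leave the strand unlinked from $C$, and independence across strands yields $1/2^r$. You simply add more care than the paper does---explicitly noting that the other concentric copies of $C$ are coplanar and hence automatically unlinked, and justifying the converse direction via $|\mathrm{lk}(\sigma_i,C)|=1$ when the crossings are mismatched---but the underlying idea is identical.
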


\begin{proof} For an axis component, consider its two crossings with $C$.  There are four options for the crossing information string for such a pair of crossings: (over, over), (over, under), (under, over),  and (under, under). Note that $C$ is unlinked from an axis component if the axis component passes under $C$ at both its crossings with $C$ or passes over $C$ at both (see Figure \ref{fig:unlinks}).  Hence half of the options for the crossing information imply that $C$ and the axis component are not linked. Therefore, if there are $r$ axis components, the probability that all of them are unlinked from $C$ is $\frac{1}{2^r}$.
\end{proof}

The random model described above is easily modified to produce only links that are alternating. Each $(r, 2s-1)$-meander graph yields the same number of alternating link diagrams. Indeed, every link projection can be made into an alternating link diagram with the correct choice of under and overpasses (i.e. of the crossing information). Therefore, this is also true for any $(r, 2s-1)$-meander graph. Moreover, there are exactly two options for alternating crossing information once a meander graph is given: choose an arbitrary graph vertex $v$, and connect two horizontal edges adjacent to $v$ by an underpass at $v$. This determines the first alternating link. The second link is obtained by making an overpass between horizontal edges at $v$. Such a (restricted) model that takes any meander graph together with a choice of alternating crossing information is what we will refer to whenever we discuss random alternating links.

We can now state the main result of this section. 

\begin{theorem}\label{thm:Rare}
Let $L$ be a $(r, 2s-1)$-random meander link.  If $L$ is alternating and $s$ tends to infinity, then $L$ is nontrivial with probability one.  If $L$ has its crossings chosen at random and  $s$ and $ r$ tend to infinity, then  $L$ is nontrivial with probability one.
\end{theorem}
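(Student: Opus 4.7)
The plan is to treat the two cases separately, in both cases leveraging Proposition \ref{thm:circlesPresent} to guarantee that a pierced circle appears in the underlying meander graph with probability tending to one.

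For the alternating case, I would argue that the presence of a single pierced circle already forces non-triviality. The axis passes through the circle component $C$ of a pierced circle at two consecutive vertices along the axis, and in an alternating diagram two consecutive crossings along the same strand are of opposite over/under types. Hence one of these two $C$-axis crossings has the axis over $C$ and the other has the axis under $C$. A routine computation of crossing signs with any fixed orientation on $C$ and the axis shows that both crossings then have the same sign, so the linking number of $C$ with the axis component is $\pm 1$. Consequently $L$ cannot be split and is non-trivial. For $r > 1$ the same reasoning applies to any pair consisting of one parallel copy of $C$ and one parallel copy of the axis. Combined with Proposition \ref{thm:circlesPresent}, which gives $1 - E(s,0)/C_s^2 \to 1$ as $s \to \infty$, this handles the alternating case.

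For the random-crossings case, the plan is to combine two independent sources of randomness. First, Proposition \ref{thm:circlesPresent} yields a pierced circle in $\Gamma_{2s-1}^1$ with probability tending to $1$ as $s \to \infty$. Second, conditional on such a pierced circle being present, Lemma \ref{lem:srlink} says that the corresponding circle component $C$ is unlinked from all other components of the $(r,2s-1)$-random link with probability only $1/2^r$, which tends to $0$ as $r \to \infty$. Since the random meander graph and the random crossing information are chosen independently, these events are independent, and the probability that $L$ has at least one pair of linked components, and hence is non-trivial, is bounded below by $(1 - E(s,0)/C_s^2)(1 - 1/2^r)$. This product tends to $1$ as $s, r \to \infty$.

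The main obstacle I expect is the alternating-case linking computation: one must carefully verify, with a fixed choice of orientations, that opposite over/under types at the two $C$-axis crossings yield crossing signs of equal sign (so the linking number is $\pm 1$) rather than opposite signs (which would give linking number $0$). This is a short local check but is the crucial geometric ingredient. A secondary bookkeeping point is that in the $r$-parallel construction each original circle $C$ survives as $r$ distinct parallel circle components of $L$, so that Lemma \ref{lem:srlink} applies directly to at least one of them as stated.
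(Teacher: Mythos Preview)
Your proposal is correct and follows essentially the same strategy as the paper: in both regimes, invoke Proposition~\ref{thm:circlesPresent} to produce a pierced circle with probability tending to one, then argue that this circle is genuinely linked to an axis strand (automatically in the alternating case, and via Lemma~\ref{lem:srlink} in the random-crossings case). The paper's own proof is terser---for the alternating case it simply asserts that an alternating link with a pierced circle cannot be an unlink---whereas you supply the linking-number justification, which is a welcome addition. One small point: for $r>1$ your phrase ``the same reasoning applies'' hides a short parity check. Along a single axis copy $A_k$, the two crossings with a fixed circle copy $C_j$ are separated by exactly $2r-2$ other crossings (the remaining $r-1$ circle copies, each met twice), so in an alternating diagram they indeed have opposite over/under types and the linking computation goes through. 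This is easy but worth stating explicitly.
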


\begin{proof} The first statement follows from Proposition \ref{thm:circlesPresent}, since an alternating link with a pierced circle cannot be an unlink. 
Similarly in the second statement, we are guaranteed there is a pierced circle as $s\to\infty$. But with crossing information string chosen at random, the circle might be unlinked from other link components. Letting $r\to\infty$, Lemma \ref{lem:srlink} ensures at least one pierced circle will be linked to an axis component. 
\end{proof}

\subsection{More on distribution of links in random meander model.} We conclude this section with several further observations and questions about obtaining different links and knots in this model.

 The above theorem shows that the probability of obtaining an unlink in this random model approaches zero as $r, s$ grow. This is in fact true if we substitute the unlink by another fixed link. 
For a link $L$, denote its number of components by $t_L$. Note that if $L$ is an $(r, 2s-1)$-random link, then $r\leq t_L \leq sr$.

\begin{proposition}\label{nonisotopic}
For a given link $K$ and a $(r, 2s-1)$-random link $L$, the probability that $K$ and $L$ are isotopic $P(L=K) \to 0$ as $r\to \infty$. 
\end{proposition}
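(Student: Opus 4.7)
The plan is to reduce the proposition to a comparison of component counts. Since the number of link components is an isotopy invariant of a link in $S^3$, $L$ can only be isotopic to the fixed link $K$ when $t_L = t_K$.

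I would then invoke the observation stated immediately before the proposition: every $(r, 2s-1)$-random link $L$ satisfies $t_L \geq r$. For every integer $r > t_K$ this gives $t_L \geq r > t_K$, which already rules out $L \simeq K$ entirely. Hence $P(L = K) = 0$ for all $r > t_K$, and in particular $\lim_{r \to \infty} P(L = K) = 0$, a conclusion that is in fact stronger than the one stated.

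The only point I would spell out for completeness is the lower bound $t_L \geq r$ itself. It is an immediate consequence of the construction: the axis component of $L_{2s-1}$ is accompanied by $r-1$ parallel strands, producing $r$ pairwise disjoint closed curves in the planar projection; at each of the $r^2$ crossings replacing an original crossing, the random crossing information only records which strand passes over which and never permutes strand labels (parallel strands never swap lanes through a crossing region). Consequently the $r$ parallel copies of the axis component remain $r$ distinct components of $L$, giving $t_L \geq r$.

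I do not foresee any real obstacle here; the entire content of the proof is this parallel-strand count, after which the conclusion is a one-line consequence of the pigeonhole-style comparison of $r$ with the fixed number $t_K$.
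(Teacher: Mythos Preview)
Your proposal is correct and follows exactly the paper's argument: the paper's proof is the single sentence ``As $t_K$ is finite, there is some $r > t_K$, and every $(r, 2s-1)$-random link $L$ has more components than the chosen link $K$,'' relying on the same inequality $t_L \geq r$ that is recorded just before the proposition. Your additional paragraph justifying $t_L \geq r$ from the parallel-strand construction is a harmless elaboration of a fact the paper simply states.
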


\begin{proof}
As $t_K$ is finite, there is some $r > t_K$, and every  $(r, 2s-1)$-random link $L$ has more components than the chosen link $K$.
\end{proof}

The above ensures that we can produce infinitely many distinct or non-isotopic links using the random meander link model. It is also interesting how many distinct links are created by using this model for every $r, s$. This can be answered for knots, which only occur when $r=1$.  

\begin{proposition}\label{prime}
There are at least $\Omega({2.68}^n)$ distinct meander prime knots with $(2^{n+1} -12)$ crossings. 
\end{proposition}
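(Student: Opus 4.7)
The plan is to exhibit an explicit family of meander prime knot diagrams with the prescribed crossing number, and then distinguish sufficiently many members of the family to achieve the claimed lower bound.

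First, I would construct a base meander prime knot $K_0$ and define a tangle-insertion operation $\Phi$ on meander diagrams that preserves both the meander structure and primeness while increasing the crossing count according to the recurrence $c_{n+1} = 2 c_n + 12$. Starting from a compatible base case yields $c_n = 2^{n+1} - 12$ crossings at stage $n$. The operation $\Phi$ should admit at least three distinct choices of auxiliary meander tangle at each stage, so that iterating $n$ times produces at least $3^n$ candidate diagrams, already exceeding the target growth rate $2.68^n$.

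Next, I would verify primeness and the meander property of the resulting diagrams. Restricting attention to reduced alternating diagrams makes primeness combinatorial: by Menasco's theorem on alternating links, a reduced alternating diagram represents a prime knot iff no simple closed curve in the projection plane meets the diagram in exactly two points with crossings on each side. One would check inductively that $\Phi$ preserves this property, and that the insertion keeps the diagram reduced and alternating. Preservation of the meander structure is immediate provided the inserted tangles are themselves meander tangles placed consistently along the axis.

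The crux is showing that enough of the $3^n$ diagrams represent non-isotopic knots, and this is the main obstacle. A priori many candidates could coincide as knots, so one needs a sensitive invariant. Natural choices include the Jones polynomial, the knot determinant, or the hyperbolic volume --- the latter is particularly tractable here given the twist-number estimates of Section~\ref{VolumeEtc} and Lackenby-type bounds. The aim is to arrange the iterated insertion so that an invariant changes predictably at each stage, allowing one to conclude that at least $\Omega(2.68^n)$ among the $3^n$ diagrams represent distinct knots. One concrete route is to label each constructed knot by the sequence of tangle choices in the recursion and to invoke rigidity results for alternating or meander diagrams, showing that non-isomorphic label sequences yield non-isotopic knots in the asserted proportion of cases.
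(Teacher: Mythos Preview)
Your proposal misses the much simpler argument the paper actually uses, and as written it has genuine gaps. The paper does not construct any explicit family of meander diagrams or any tangle-insertion operation. Instead it combines two off-the-shelf facts: (i) every knot $K$ with crossing number $c(K)$ admits a meander diagram with at most $2^{c(K)+1}-12$ crossings (this is Theorem~\ref{thm:OwadMeander} together with the explicit bound from \cite{Owad2}); and (ii) the number of prime knots with crossing number $n$ is at least $\Omega(2.68^n)$ (Welsh \cite{Welsh}). Since every prime knot of crossing number $n$ then appears among the meander diagrams with at most $2^{n+1}-12$ crossings, the count follows immediately. No construction, no invariant computation, no primeness check is needed: the distinct prime knots are already supplied by Welsh's count, and the meander-crossing bound guarantees they all show up by the stated crossing level.

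By contrast, your plan leaves the two hardest steps essentially unaddressed. You posit an operation $\Phi$ satisfying the recurrence $c_{n+1}=2c_n+12$ and preserving the meander and primeness properties, but give no construction; there is no reason a priori that such a $\Phi$ with three independent choices per stage exists. More seriously, you yourself identify the distinguishing step as ``the main obstacle'' and then only gesture at possible invariants. None of the suggested tools (Jones polynomial, determinant, volume bounds from Section~\ref{VolumeEtc}) comes with a mechanism to separate $\Omega(2.68^n)$ of your $3^n$ diagrams; the twist-number volume bounds in particular are far too coarse for this, and rigidity results for alternating diagrams distinguish diagrams up to flypes, not arbitrary tangle choices. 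So the proposal is a sketch of a much harder route whose key lemmas are neither stated precisely nor proved, while the intended argument is a two-line citation.
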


\begin{proof}
Suppose we have a knot $K$ with crossing number $c(K)$. Denote the number of crossings in $(1, 2s-1)$-meander diagram of $K$ by $m(K)$. Theorem \ref{thm:OwadMeander} says that every knot has a meander diagram. Moreover, for random meander knots, Theorem 3.2 and Proposition 3.4 in \cite{Owad2} combined yield $m(K)\leq 4(2^{c(K) -1} - 1) -8 = 4(2)^{c(K) -1} -12 = 2^{c(K)+1} -12$. Therefore, once we generate meander knot diagrams with at most $2^{n+1} -12$ crossings, we also generate all knots with $n$ crossings. The number of prime knots with $n$ crossings was given by Welsch \cite{Welsh} and is at least $\Omega (2.68^n)$.
\end{proof}

To make a similar observation for links ($r>1$), a bound on the crossing number of a meander link is needed in terms of the crossing number of a link. Then the number of prime links with $n$ crossings can be used similarly, which can also be found in Welsch \cite{Welsh}. 

\begin{question} Given an $(r, 2s-1)$-meander link $L$ with crossing number $m(L)$, what is an upper bound on the minimal crossing number $c(L)$ of this link?
\end{question}

\section{Expected number of twists and Expected volume}\label{VolumeEtc}

In this section, we investigate some properties of links in our model. We find the expected number of twists for our diagram. We give bounds on the mathematical expectation for hyperbolic and simplicial volume of link complements. 

\subsection{Expected number of twists in a random link diagram.} In Section \ref{SecCircles}, we discussed how Catalan numbers are related to strings of parentheses. Consider \texttt{()}, an open parenthesis which is adjacent to its matching closed parenthesis in a string. Now let us count the number of $p$-string of length $n$ with exactly $k$ such substrings \texttt{()}. The number is the Narayana number, denoted by $N(n,k)$.  By definition, $\sum_{k=1}^n N(n,k) = C_n$. Explicitly, $N(n,k) = \frac{1}{n} {{n}\choose k}{{n}\choose k-1}$.    Note that there is always an innermost pair of parentheses, so $N(n,k)\geq 1$ for an integer $k\in [1,n]$ and zero else. For a more in depth discussion on Narayana Numbers, see Peterson \cite{Peterson}.  We will also use the fact that $N(n,k) = N(n,n-k+1)$.  The following is well-known; since we did not find the exact reference, we give one of possible short proofs.

\begin{lemma}\label{lem:narayana}
The expected number of nestings in a random $p$-string of length $n$ is $\frac{n+1}{2}$.
\end{lemma}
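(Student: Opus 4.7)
The plan is to compute the expectation directly from the definition
$$\mathbb{E} \;=\; \frac{1}{C_n}\sum_{k=1}^{n} k\,N(n,k)$$
and exploit the symmetry $N(n,k)=N(n,n-k+1)$ that is already recorded just before the lemma.

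First I would reindex the sum on the right by setting $j=n-k+1$, which gives
$$\sum_{k=1}^{n} k\,N(n,k) \;=\; \sum_{k=1}^{n} k\,N(n,n-k+1) \;=\; \sum_{j=1}^{n}(n-j+1)\,N(n,j).$$
Adding the original sum to this rewritten form causes the $k$ and $n-k+1$ terms to collapse, yielding
$$2\sum_{k=1}^{n} k\,N(n,k) \;=\; \sum_{k=1}^{n}(n+1)\,N(n,k) \;=\; (n+1)\,C_n,$$
where the last equality uses $\sum_{k=1}^{n} N(n,k)=C_n$. Dividing by $2C_n$ produces the claimed value $\tfrac{n+1}{2}$.

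This argument is essentially symmetry-plus-normalization, so there is no real obstacle; the only thing to be careful about is citing the two facts about Narayana numbers (the total being $C_n$ and the reflection symmetry) exactly as stated in the paragraph preceding the lemma. An alternative (which I would mention only if a more self-contained proof is wanted) is linearity of expectation: let $X_i$ be the indicator that positions $i,i+1$ of a uniformly random $p$-string form the substring \texttt{()}, and compute $P(X_i=1)$ via a ballot-style counting of Dyck paths with a forced peak at position $i$; summing $P(X_i=1)$ over the $2n-1$ possible peak locations again gives $\tfrac{n+1}{2}$. The symmetry proof is much shorter and is what I would present.
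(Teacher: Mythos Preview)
Your proof is correct and is essentially the same as the paper's: both compute $\mathbb{E}=\frac{1}{C_n}\sum_{k}kN(n,k)$, invoke the symmetry $N(n,k)=N(n,n-k+1)$ to pair the sum with its reversal, and use $\sum_k N(n,k)=C_n$ to collapse everything to $\tfrac{n+1}{2}$. Your reindexing presentation is slightly slicker than the paper's term-by-term expansion, but the argument is the same.
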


\begin{proof} We perform the straightforward calculation below.

\begin{center}
$\mathbb{E}(\# {\text{ of }} \texttt{()}~) =  \sum_{k=1}^{n} k\frac{N(n,k)}{C_n}  =\frac{1}{2} \left[ \sum_{k=1}^{n} k\frac{N(n,k)}{C_n}+\sum_{k=1}^{n} k\frac{N(n,k)}{C_n}  \right] = $

$ =\frac{1}{2} \left[ 1\frac{N(n,1)}{C_n}+ 2\frac{N(n,2)}{C_n}+\quad \cdots\quad + n\frac{N(n,n)}{C_n}+  \right.$
$ \qquad + \left. n\frac{N(n,n)}{C_n} + (n-1)\frac{N(n,n-1)}{C_n} + \cdots + 1\frac{N(n,1)}{C_n} \right]. $
\end{center}

Since $N(n, k)=N(n, n-k+1)$ for $k=1, 2, ..., n$, the above is equal to

\begin{center}

$ \frac{1}{2} \left[ 1\frac{N(n,1)}{C_n}+ 2\frac{N(n,2)}{C_n}+\quad \cdots\quad + n\frac{N(n,n)}{C_n}+  \right.$
$+ \qquad  \left. n\frac{N(n,1)}{C_n} + (n-1)\frac{N(n,2)}{C_n} + \cdots + 1\frac{N(n,n)}{C_n} \right] =$
$ =\frac{1}{2} \left[ (n+1)\frac{N(n,1)}{C_n}+ (n+1)\frac{N(n,2)}{C_n}+\cdots+ (n+1)\frac{N(n,n)}{C_n}  \right]=$
$ =\frac{n+1}{2} \left[ \frac{C_n}{C_n} \right] =\frac{n+1}{2}$.
\end{center}\end{proof}


\begin{lemma}\label{ExpBigons}
Given a link $L^r_{2s-1}$, the expected number of bigons is $s+1$.
\end{lemma}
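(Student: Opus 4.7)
The plan is to identify bigons in the meander link diagram with \emph{nestings} (adjacent matched pairs \texttt{()}) in the two $p$-strings of length $s$ that generate the underlying meander graph $\Gamma_{2s-1}$. Each nesting in the top $p$-string at position $i$ produces an innermost upper semicircular arc with endpoints at the consecutive axis vertices labeled $i$ and $i+1$; together with the axis segment between these two vertices, this arc bounds a bigon of the diagram. An analogous statement holds for nestings in the bottom $p$-string, which yield bigons below the axis. The first step will be to verify that this assignment is a bijection: any bigon in the diagram is bounded by two edges sharing the same pair of endpoints, and the only way this can happen in the meander construction is for one edge to be an innermost semicircular arc arising from a nesting in one of the two $p$-strings.

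I will also need to check that the correspondence passes to the parallelized diagram $L^r_{2s-1}$ for $r > 1$. Under parallelization each nesting becomes a short twist region of concentric arcs, and I will single out one canonical bigon in each such region (for instance, the innermost face between the two deepest concentric arcs, flanked by the adjacent pair of grid crossings) so that the total bigon count remains equal to the total number of nestings, independently of $r$.

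Granted the bijection, the probabilistic computation is immediate. The two $p$-strings are chosen uniformly and independently from the $C_s$ valid $p$-strings of length $s$, so the expected total number of nestings is the sum of the two individual expectations. By Lemma \ref{lem:narayana}, each $p$-string of length $s$ contributes an expected $\frac{s+1}{2}$ nestings, and linearity of expectation yields
\[
\mathbb{E}[\#\text{bigons}] \;=\; \frac{s+1}{2} + \frac{s+1}{2} \;=\; s+1.
\]

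The main obstacle I foresee is pinning down the bijection in the parallelized case $r > 1$, since one must rule out both extra bigons introduced by the twist regions and possible collapse of bigons from distinct nestings. The expectation calculation itself is a one-line application of Lemma \ref{lem:narayana}.
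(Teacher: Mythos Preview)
Your approach matches the paper's: identify bigons with nestings in the two $p$-strings, apply Lemma~\ref{lem:narayana} to each, and add by linearity (independence). The paper's own proof is a single sentence and neither justifies the bijection nor treats $r>1$ separately, so your outline is already more detailed than what appears there. One minor correction in the $r>1$ case: the face ``between the two deepest concentric arcs'' is a $4$-gon (two arc edges plus two short grid edges); the actual bigon contributed by a nesting is bounded by the single innermost arc together with the adjacent parallel copy of the axis, since these two edges share both endpoint crossings.
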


\begin{proof}
A link $L^r_{2s-1}$, is created by a string of parentheses above the axis and below the axis.  By Lemma \ref{lem:narayana}, the expected number of nestings is $\frac{s+1}{2}$ for each, and these are independent, so we add.  
\end{proof}

\begin{theorem}\label{twists}
Let $D$ be the diagram of an $(r, 2s-1)$-random meander link in $S^3$.  Then the expected number of twist regions is $(2s-1)r^2-s-1$.  In particular, when $r=1$, the expected number of twist regions is $s-2$.
\end{theorem}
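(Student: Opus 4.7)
The plan is to express the twist-region count as the difference between the (deterministic) total number of crossings and the (random) number of bigons, and then apply linearity of expectation together with Lemma \ref{ExpBigons}.

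First I would record the elementary deterministic identity that, in any link diagram whose twist regions are linear chains of bigons, $t = c - b$, where $t$ is the number of twist regions, $c$ the total number of crossings, and $b$ the total number of bigons. This is immediate from partitioning crossings by the twist region to which they belong: a twist region consisting of $k$ crossings is a linear chain containing exactly $k-1$ bigons, so summing over twist regions of sizes $k_1,\dots,k_t$ gives $\sum k_i = c$ and $\sum (k_i-1) = b$, whence $t = c-b$.

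Next I would compute the total number of crossings of any diagram $D \in \mathcal{L}^r_{2s-1}$. The underlying meander graph $\Gamma_{2s-1}$ has exactly $2s-1$ vertices by construction, and each vertex is replaced by an $r \times r$ grid of crossings when the $r-1$ parallel copies of each component are added to form $\Gamma^r_{2s-1}$. Hence $c = (2s-1)r^2$, and this number is constant in the model: it depends only on $r$ and $s$, not on the random choice of $p$-strings or crossing information.

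Finally, linearity of expectation combined with Lemma \ref{ExpBigons} yields
\[
\mathbb{E}[t] \;=\; c - \mathbb{E}[b] \;=\; (2s-1)r^2 - (s+1),
\]
which specialises to $s-2$ when $r=1$. The step most warranting care is the identity $t = c - b$: one must verify that the twist regions arising in our random meander diagrams really are linear chains of bigons, so that no closed cyclic chain of bigons can occur and the bigon bookkeeping of Lemma \ref{ExpBigons} is directly compatible with the twist-region count; this should follow from the rigid structure of the meander construction, but is worth spelling out.
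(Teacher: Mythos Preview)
Your proposal is correct and follows essentially the same approach as the paper's own proof: both use the identity $t = c - b$, observe that $c = (2s-1)r^2$ is deterministic, and then invoke Lemma~\ref{ExpBigons} to conclude $\mathbb{E}[t] = (2s-1)r^2 - (s+1)$. Your version is in fact more careful than the paper's, since you justify the identity $t = c - b$ via the partition into twist regions and flag the need to rule out cyclic bigon chains, whereas the paper simply asserts the identity without comment.
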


\begin{proof}

Given a link diagram $D$ with $c$ crossings, $b$ bigons, and $t$ twist regions, the number $t=c-b$. Together with Lemma \ref{ExpBigons}, this observation yields 
$$t = (2s-1)r^2 - (s+1) = (2s-1)r^2-s-1.$$

Let $r = 1$ and we obtain $t=s-2$.  
\end{proof}

\subsection{Expected hyperbolic and simplicial volume.} We now apply this to obtain bounds on the mathematical expectation of volume for links.

\begin{theorem}\label{bound} [Lackenby \cite{Lackenby}, Agol-Thurston]
Let $D$ be a prime alternating diagram of a hyperbolic link $L$ in $S^3$ with twist number $t(D)$. Then
$$v_3(t(D) -2)/2 \leq Volume(S^3 -L) < 10v_3(t(D)-1),$$

where $v_3$ is the volume of a regular hyperbolic ideal 3-simplex.
\end{theorem}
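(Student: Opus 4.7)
The plan is to prove the two inequalities by different techniques, both of which exploit the structure of the alternating diagram $D$. For the upper bound $\mathrm{Vol}(S^3\setminus L) < 10 v_3 (t(D)-1)$, I would first reduce via flypes to a \emph{twist-reduced} diagram in which every twist region consists of at most two crossings; this reduction preserves both the link type and the twist number $t(D)$. Then I would invoke Menasco's classical ideal polyhedral decomposition of an alternating link complement: the two checkerboard colourings of $D$ yield two ideal polyhedra $P_{+}, P_{-}$ glued along their boundaries, with one ideal vertex per crossing of $D$. Subdividing each $P_\pm$ into ideal tetrahedra produces a topological ideal triangulation of $S^3\setminus L$ whose tetrahedron count, after the twist-reduction, is bounded linearly in $t(D)-1$ (with constant no worse than $10$). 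Finally I would straighten this triangulation in $\mathbb{H}^3$ and apply the inequality $\mathrm{Vol}(M)\le v_3\cdot \#(\text{ideal tetrahedra})$, valid because every ideal tetrahedron in $\mathbb{H}^3$ has volume at most $v_3$, with equality only for the regular one; strictness comes from the fact that the straightening cannot be everywhere regular.

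For the lower bound $v_3(t(D)-2)/2 \le \mathrm{Vol}(S^3\setminus L)$, my plan is to use the Agol--Thurston approach via essential surfaces and \emph{guts}. Menasco--Thistlethwaite's theorem gives that the checkerboard surfaces of a prime alternating diagram are essential, so I would cut $S^3\setminus L$ along one of them, say the ``white'' checkerboard surface $W$, to obtain a manifold $M'$. I would then apply Agol's inequality
\[
\mathrm{Vol}(S^3\setminus L)\ \ge\ v_3\cdot \|\mathrm{guts}(M')\|,
\]
where $\mathrm{guts}(M')$ is the purely hyperbolic part of the JSJ decomposition of $M'$ and $\|\cdot\|$ is the Gromov simplicial volume. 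The remaining task is combinatorial: identify the $I$-bundle pieces of $M'$ with the bigon-strips coming from multi-crossing twist regions (they contribute nothing to guts or to volume), and bound the simplicial volume of what remains from below by $(t(D)-2)/2$ by matching guts simplices to the ``skeleton'' of $D$ obtained after collapsing each twist region to a single crossing. The Gromov--Thurston equality $\mathrm{Vol}(N)=v_3\cdot\|N\|$ for hyperbolic $3$-manifolds then gives the desired inequality.

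The principal obstacle is the lower bound. The upper bound is comparatively mechanical once Menasco's decomposition and the flype reduction are in hand. By contrast, the lower bound rests on correctly carrying out the JSJ analysis of the cut manifold $M'$: one must show that precisely the bigon-twist regions give rise to $I$-bundle components (so are \emph{invisible} to volume) and that the complementary guts pieces have simplicial volume matching $(t(D)-2)/2$ with the correct additive constant. Getting the constant $-2$ rather than some larger loss requires a careful, case-based accounting of how twist regions interact with the checkerboard surfaces near the two ``outer'' regions of $D$, which is the most delicate step in the whole argument.
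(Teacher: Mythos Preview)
This theorem is not proved in the present paper at all: it is quoted from Lackenby \cite{Lackenby} (with the Agol--Thurston appendix) and used as a black box in Corollary~\ref{volume}. So there is no ``paper's own proof'' to compare against, only the original literature.

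That said, your sketch contains a real error in the upper bound. You propose to ``reduce via flypes to a twist-reduced diagram in which every twist region consists of at most two crossings.'' Flypes cannot do this: a flype moves a tangle past a crossing and preserves the crossing number of an alternating diagram (by the Tait flyping theorem), so a twist region with many crossings remains a twist region with many crossings. The actual Agol--Thurston argument for the upper bound does not triangulate $S^3\setminus L$ directly. Instead one passes to the \emph{fully augmented link} $L'$ obtained by encircling each twist region of $D$ with an unknotted component; the complement $S^3\setminus L'$ is independent (up to homeomorphism) of the number of crossings in each twist region, admits an explicit decomposition into two right-angled ideal polyhedra, and hence has volume bounded by a constant times $t(D)$. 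Since $S^3\setminus L$ is a Dehn filling of $S^3\setminus L'$, its volume is strictly smaller, and a careful count gives the constant $10v_3$ and the $-1$. Without the augmentation/Dehn-filling step your triangulation count would scale with the total crossing number, not with $t(D)$.

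Your lower-bound outline is in the right spirit---cut along a checkerboard surface, identify the $I$-bundle pieces with bigon chains, and bound the guts---but the attribution is reversed: this is Lackenby's argument (using Agol's guts inequality), while the contribution labelled ``Agol--Thurston'' in \cite{Lackenby} is precisely the upper bound. Also, the guts computation ultimately yields an Euler-characteristic count rather than a direct simplex count; the $(t(D)-2)/2$ comes from $-\chi(\mathrm{guts})$, not from ``matching guts simplices'' as you phrase it.
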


\begin{remark}\label{simplicial} The above upper bound was originally proved for all hyperbolic links, including non-alternating ones. It is also shown in \cite{DT} that it holds for simplicial volume of non-hyperbolic links.
\end{remark}

In the spirit of Obeidin's observations on volume in another random link model \cite{Obeidin}, we apply this to random meander links:

\begin{corollary}\label{volume}

Let $L$ be a $(r, 2s-1)$-random meander link in $S^3$. Then the mathematical expectation for its volume, hyperbolic or simplicial, satisfies the upper bound:
$$\mathbb{E}(Volume(S^3 -L) \leq 10v_3((2s-1)r^2-s-3).$$
In particular, when $r = 1$, $L$ has $2s-1$ crossings, and
$$\mathbb{E}(Volume(S^3 -L) ) \leq 10v_3(s-3).$$
Additionally, for any $(r, 2s-1)$-random alternating meander link $L$ with $r \neq 1$, and $L$ hyperbolic, the mathematical expectation for its hyperbolic volume satisfies the lower bound: \begin{center}
$v_3((2s-1)r^2-s-5)/2 \leq \mathbb{E}(Volume(S^3 -L) ).$ \end{center}
\end{corollary}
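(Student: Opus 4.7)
The plan is to apply the pointwise bounds of Theorem \ref{bound} and then pass to expectations via linearity, since both inequalities in that theorem are affine functions of the twist number $t(D)$. The decisive input is Theorem \ref{twists}, which supplies $\mathbb{E}[t(D)] = (2s-1)r^2 - s - 1$. Because $v_3$ is a deterministic constant, linearity of expectation converts each pointwise inequality in $t(D)$ directly into the corresponding inequality in $\mathbb{E}[t(D)]$.

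For the upper bound, Remark \ref{simplicial} extends the inequality $Volume(S^3-L) < 10v_3(t(D)-1)$ to the simplicial volume of arbitrary (not necessarily hyperbolic) links, so it is valid across the whole random model. Taking expectations of both sides yields $\mathbb{E}[Volume(S^3-L)] \leq 10v_3(\mathbb{E}[t(D)]-1)$, and substituting the expected twist number from Theorem \ref{twists} gives the claimed bound. The $r=1$ specialization then follows immediately from $\mathbb{E}[t(D)] = s-2$ and the fact that a $(1,2s-1)$-meander diagram has exactly $2s-1$ crossings. For the lower bound, I would restrict to the alternating variant of the model introduced just before Theorem \ref{thm:Rare} and condition on $L$ being hyperbolic, as these are precisely the hypotheses of the lower inequality in Theorem \ref{bound}. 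Applying $v_3(t(D)-2)/2 \leq Volume(S^3-L)$ pointwise and taking expectations analogously produces $v_3(\mathbb{E}[t(D)]-2)/2 \leq \mathbb{E}[Volume(S^3-L)]$, and substitution then yields the stated lower bound.

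The main obstacle is verifying the hypotheses of Theorem \ref{bound} for the lower bound, where the diagram is required to be prime in addition to alternating and hyperbolic. The upper bound is unobstructed because Remark \ref{simplicial} eliminates the hyperbolicity hypothesis and no primeness assumption enters. For the lower bound, however, one must either argue that primeness and hyperbolicity hold with probability tending to one as $r,s \to \infty$, so that conditioning leaves $\mathbb{E}[t(D)]$ essentially unchanged, or else carefully bound the contribution of the exceptional (non-prime or non-hyperbolic) diagrams; this latter analysis is the natural source of the specific additive constants appearing in the stated formulas. Establishing such conditional concentration of the twist count, in the spirit of Theorem \ref{thm:Rare}, is the delicate part of the argument and is what drives any shift between the naive expected-value substitution and the final stated inequalities.
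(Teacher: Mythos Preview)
Your treatment of the upper bound is essentially the paper's: linearity of expectation applied to the affine bound from Theorem~\ref{bound}, with Remark~\ref{simplicial} removing the hyperbolicity hypothesis.

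The lower bound, however, has a real gap. You correctly flag primeness as the obstruction, but then propose to handle it probabilistically---either by showing primeness holds with probability tending to one, or by controlling the contribution of non-prime diagrams through some concentration argument for $t(D)$. That is not what the paper does, and more to the point it does not explain the specific shift from $-3$ to $-5$ in the constant. The paper's argument is deterministic: when $r\neq 1$, every strand in the diagram comes with $r\geq 2$ parallel copies, so any simple closed curve meeting the diagram in exactly two points would have to separate a pair of parallel strands, which forces the tangle on one side to be trivial. Thus every reduced alternating $(r,2s-1)$-meander diagram with $r>1$ is prime. This is precisely why the hypothesis $r\neq 1$ appears in the statement---it is not a technicality but the mechanism that delivers primeness for free.

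What remains is that the diagram may fail to be reduced: a nugatory crossing can occur only at the two extremal positions (upper-leftmost nesting on top, or lower-rightmost nesting on bottom), so at most two crossings must be untwisted. Removing them decreases the twist count by at most~$2$, and this is exactly the source of the constant $-5$ in place of the $-3$ your direct substitution would give. No probabilistic conditioning or concentration is needed.
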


\begin{proof} Both upper an lower bounds follow from Theorems \ref{twists} and \ref{bound}, and Remark \ref{simplicial}. Note however that the lower bound in Theorem \ref{bound} is for prime alternating diagrams only, while our model produces all alternating diagrams once the crossing information is chosen correctly. We claim that every alternating $(r, 2s-1)$-meander diagram is a prime link diagram under two conditions: with $r$ not equal to 1, and with no nugatory self-crossings. Indeed, in the absence of nugatory self-crossings, there are at least four strands going out of every non-trivial tangle when there are $r > 1$ identical copies of every strand, and hence the diagram is prime. 

A nugatory crossing can only occur in the upper rightmost or bottom leftmost end of a $(r, 2s-1)$-meander graph.  This happens when a nesting occurs at the leftmost position on the top or at the rightmost position on the bottom.  Such a link diagram is not reduced. If the diagram is alternating, one can reduce it simply by removing the nugatory self-crossings (untwisting it), and then count the remaining twists and apply the lower volume bound from Theorem \ref{bound} . To account for this potential untwisting, we subtract 2 from the expected twist number in the lower bound: this results in the constant $-5$ in our lower bound, compared to $-3$ in the bound of Theorem \ref{bound}. 

Also note that when we consider the restricted random model that produces only alternating links, this does not change Lemma \ref{ExpBigons} and Theorem \ref{twists}. Indeed, the expected number of twists is computed there for meander graphs, not involving crossing information, and hence is the same for this (restricted) model.

\end{proof}

The above bounds are all linear in $s$ and quadratic in $r$, but there are $(2s-1)r^2$ crossings in the links.  Thus, we have linear bounds in the number of crossings for the expected volume of a random meander link.

\subsection{Some related questions.}One of the difficulties with some random link and knot models is that they rarely yield hyperbolic links. For example, links obtained using random walks in plane or space are often composite, and cannot be hyperbolic by W. Thurston's results \cite{Thurston}. At the same time, hyperbolic links are often the ones that posses deep geometric and topological structure, and interesting properties.  Hyperbolicity of random meander links is therefore a natural question. 

\begin{question}
What is the probability for an $(r, 2s-1)$-random meander link diagram to represent a hyperbolic link? What about the probability for the family of alternating $(r, 2s-1)$-random meander link diagrams?
\end{question}

One way of approaching the above question might be  through tracking the presence of certain fragments in meander diagrams and meander graphs, as we did above for detecting unlinks. Note that a $(1,{2s-1})$-meander link $L$ with pierced circle $C$ is a satellite link and thus is not hyperbolic by \cite{Thurston}. Indeed, if $C$ and the axis component $A$ are linked, there is an embedded essential torus $T$ following $A$ in the complement of $L$. If $C$ and $A$ are not linked, $L$ is a split link, and is also not hyperbolic.

\begin{question}
What fragments of a meander link diagram or meander graph guarantee that the respective link complement in the 3-sphere is not hyperbolic? What is the probability for each such fragment to appear?
\end{question}

We provide bounds for mathematical expectation for volume of links complements above. In \cite{DT0, DT}, the upper bound for hyperbolic and simplicial volume from \cite{Lackenby} is refined based on differentiating between twists with 1, 2, 3, or at least 4 crossings. It is interesting whether one can find the probability of having such twists in $(r, 2s-1)$-random meander link diagram and apply this to these volume bounds similarly to the above.

\begin{question}
Can the upper bound for the mathematical expectation of the volume of $(r, 2s-1)$-random meander link complement be refined? 
\end{question}

\section{Appendix}

\begin{claim}\label{claim} With the notation as in the rest of the  paper, the following holds:
\begin{equation*} \label{eq1} \begin{split} E(s, k)   &= O(s, k) - \sum_{m=k+1}^s   \ch{m}{k} \sum_{j=m}^s (-1)^{m+j}\ch{j}{m}O(s, j)  \\
 & = O(s, k) - \sum_{m=k+1}^s O(s, m) \ch{m}{k}   \left[ \sum_{i=k+1}^{m} (-1)^{m-i} \ch{m-k}{m-i}\right]
 \end{split}
\end{equation*}
\end{claim}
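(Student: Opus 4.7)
The first equality follows directly from the induction step of Lemma \ref{Circles}: for every $m$ with $k+1 \le m \le s$, the inductive hypothesis supplies $E(s,m) = \sum_{j=m}^s (-1)^{m+j}\ch{j}{m} O(s,j)$. Substituting this into equation (\ref{Ek}), namely $E(s,k) = O(s,k) - \sum_{m=k+1}^s \ch{m}{k} E(s,m)$, gives the first displayed form of $E(s,k)$ with no further manipulation.

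For the second equality, the plan is to interchange the order of summation in the double sum over pairs $(m,j)$ with $k+1 \le m \le j \le s$. After swapping and then relabeling the outer variable $j$ as $m$ and the inner variable $m$ as $i$, the double sum becomes $\sum_{m=k+1}^s O(s,m) \sum_{i=k+1}^m (-1)^{i+m}\ch{i}{k}\ch{m}{i}$. Two elementary simplifications remain. First, $(-1)^{i+m} = (-1)^{m-i}$ since $(-1)^{2i}=1$. Second, the subset-of-a-subset identity $\ch{i}{k}\ch{m}{i} = \ch{m}{k}\ch{m-k}{m-i}$, which is verified by expanding both sides as $m!/(k!\,(i-k)!\,(m-i)!)$. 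Pulling the resulting factor $\ch{m}{k}$ outside of the inner sum then yields exactly the second displayed expression.

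The entire argument is a bookkeeping exercise, so no serious obstacle is anticipated. The only care needed is to get the summation limits right after the swap (the inner index should run from $k+1$ up to $m$, not from $0$ or $1$) and to correctly re-express the sign $(-1)^{i+m}$ as $(-1)^{m-i}$ after the reindexing.
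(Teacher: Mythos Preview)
Your proof is correct and follows essentially the same approach as the paper: both regroup the double sum according to the index of $O(s,\cdot)$ and then apply the identity $\ch{i}{k}\ch{m}{i}=\ch{m}{k}\ch{m-k}{m-i}$. Your phrasing via interchange of summation is more compact than the paper's explicit expansion-and-regrouping, but the underlying manipulation is identical.
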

\begin{proof}
Starting with the first equality 
\begin{center}

$E(s, k)   = O(s, k) - \sum_{m=k+1}^s   \ch{m}{k} \sum_{j=m}^s (-1)^{m+j}\ch{j}{m}O(s, j)$,

\end{center}
{expand the sums and group the terms with $O(s, k)$, then with $O(s, k+1)$, then $O(s, k+2)$, and so up to $O(s, s)$. We obtain}

 \noindent $ E(s, k)=O(s, k) - O(s, k+1) \ch{k+1}{k} +O(s, k+2)\left[-\ch{k+2}{k}  +   \ch{k+1}{k}\ch{k+2}{k+1}\right] +...$

\noindent  $+O(s, s) \left[-\ch{s}{k} +\ch{s-1}{k}\ch{s}{s-1} - ... \mp   \ch{k+2}{k}\ch{s}{k+2} \pm \ch{k+1}{k}\ch{s}{k+1} \right]$.

 Rewrite the latter expression using the summation notation for the sums next to $O(s, k+1)$, $O(s, k+2)$, ..., $O(s, s)$:

\noindent $E(s, k)=O(s, k)  - O(s, k+1) \sum_{i=k+1}^{k+1} (-1)^{k+1+i} \ch{i}{k}\ch{k+1}{i}-$

\noindent $-O(s, k+2) \sum_{i=k+1}^{k+2} (-1)^{k+2+i} \ch{i}{k}\ch{k+2}{i} - O(s, k+3) \sum_{i=k+1}^{k+3} (-1)^{k+3+i} \ch{i}{k}\ch{k+3}{i}-$

\noindent $-...-O(s, s) \sum_{i=k+1}^{s} (-1)^{s+i} \ch{i}{k}\ch{s}{i}=O(s, k) - \sum_{m=k+1}^s  O(s, m)  \left[ \sum_{i=k+1}^{m} (-1)^{m+i} \ch{i}{k}\ch{m}{i}\right]$.

Using that $\ch{i}{k}\ch{m}{i}=\ch{m}{k}\ch{m-k}{m-i}$, we have

\begin{equation}\label{star}
E(s, k)=O(s, k) - \sum_{m=k+1}^s O(s, m) \ch{m}{k}   \left[ \sum_{i=k+1}^{m} (-1)^{m-i} \ch{m-k}{m-i}\right].\tag{$*$}
\end{equation}
\end{proof}

\textbf{Acknowledgments}

\
Tsvietkova was partially supported by the National Science Foundation (NSF) of the United States, grants DMS-1664425 (previously 1406588) and DMS-2005496, and the Institute of Advanced Study under NSF grant DMS-1926686. Both authors thank Okinawa Institute of Science and Technology for the support.

We thank Kasper Anderson from Lund University for drawing our attention to Zeilberger's Algorithm and Poincar\'e's theorem that played an important role in Section \ref{Unlinks}. We are grateful to referees whose suggestions made this a better paper.

\

Anastasiia Tsvietkova 

Rutgers University, Newark

n.tsvet@gmail.com 

\

Nicholas Owad 

nick@owad.org

\end{document}